\begin{document}
\input xy
\xyoption{all}

\newcommand{\dd}{\operatorname{dom.dim}\nolimits}
\newcommand{\Aus}{\operatorname{Aus}\nolimits}
\renewcommand{\mod}{\operatorname{mod}\nolimits}
\newcommand{\proj}{\operatorname{proj}\nolimits}
\newcommand{\inj}{\operatorname{inj}\nolimits}
\newcommand{\rad}{\operatorname{rad}\nolimits}
\newcommand{\soc}{\operatorname{soc}\nolimits}
\newcommand{\Ginj}{\operatorname{Ginj}\nolimits}
\newcommand{\Mod}{\operatorname{Mod}\nolimits}
\newcommand{\R}{\operatorname{R}\nolimits}
\newcommand{\End}{\operatorname{End}\nolimits}
\newcommand{\ob}{\operatorname{Ob}\nolimits}
\newcommand{\Ht}{\operatorname{Ht}\nolimits}
\newcommand{\ind}{\operatorname{ind}\nolimits}
\newcommand{\rep}{\operatorname{rep}\nolimits}
\newcommand{\Ext}{\operatorname{Ext}\nolimits}
\newcommand{\Tor}{\operatorname{Tor}\nolimits}
\newcommand{\Hom}{\operatorname{Hom}\nolimits}
\newcommand{\Pic}{\operatorname{Pic}\nolimits}
\newcommand{\Coker}{\operatorname{Coker}\nolimits}
\newcommand{\res}{\operatorname{res}\nolimits}

\newcommand{\Gproj}{\operatorname{Gproj}\nolimits}
\newcommand{\irr}{\operatorname{irr}\nolimits}
\newcommand{\RHom}{\operatorname{RHom}\nolimits}
\renewcommand{\deg}{\operatorname{deg}\nolimits}
\renewcommand{\Im}{\operatorname{Im}\nolimits}
\newcommand{\Ker}{\operatorname{Ker}\nolimits}
\newcommand{\Aut}{\operatorname{Aut}\nolimits}
\newcommand{\Id}{\operatorname{Id}\nolimits}
\newcommand{\Qcoh}{\operatorname{Qch}\nolimits}
\newcommand{\CM}{\operatorname{CM}\nolimits}
\newcommand{\Cp}{\operatorname{Cp}\nolimits}
\newcommand{\coker}{\operatorname{Coker}\nolimits}
\renewcommand{\dim}{\operatorname{dim}\nolimits}
\renewcommand{\div}{\operatorname{div}\nolimits}
\newcommand{\Ab}{{\operatorname{Ab}\nolimits}}
\renewcommand{\Vec}{{\operatorname{Vec}\nolimits}}
\newcommand{\pd}{\operatorname{proj.dim}\nolimits}
\newcommand{\id}{\operatorname{inj.dim}\nolimits}
\newcommand{\Gd}{\operatorname{G.dim}\nolimits}
\newcommand{\gldim}{\operatorname{gl.dim}\nolimits}
\newcommand{\dimv}{\operatorname{\underline{dim}}\nolimits}
\newcommand{\sdim}{\operatorname{sdim}\nolimits}
\newcommand{\add}{\operatorname{add}\nolimits}
\newcommand{\pr}{\operatorname{pr}\nolimits}
\newcommand{\oR}{\operatorname{R}\nolimits}
\newcommand{\oL}{\operatorname{L}\nolimits}
\newcommand{\Perf}{{\mathfrak Perf}}
\newcommand{\cc}{{\mathcal C}}
\newcommand{\ce}{{\mathcal E}}
\newcommand{\cs}{{\mathcal S}}
\newcommand{\cf}{{\mathcal F}}
\newcommand{\cx}{{\mathcal X}}
\newcommand{\ct}{{\mathcal T}}
\newcommand{\cu}{{\mathcal U}}
\newcommand{\cv}{{\mathcal V}}
\newcommand{\cn}{{\mathcal N}}
\newcommand{\ch}{{\mathcal H}}
\newcommand{\ca}{{\mathcal A}}
\newcommand{\cb}{{\mathcal B}}
\newcommand{\ci}{{\mathcal I}}
\newcommand{\cj}{{\mathcal J}}
\newcommand{\cm}{{\mathcal M}}
\newcommand{\cp}{{\mathcal P}}
\newcommand{\cq}{{\mathcal Q}}
\newcommand{\cg}{{\mathcal G}}
\newcommand{\cw}{{\mathcal W}}
\newcommand{\co}{{\mathcal O}}
\newcommand{\cd}{{\mathcal D}}
\newcommand{\ck}{{\mathcal K}}
\newcommand{\calr}{{\mathcal R}}
\newcommand{\ol}{\overline}
\newcommand{\ul}{\underline}
\newcommand{\st}{[1]}
\newcommand{\ow}{\widetilde}
\renewcommand{\P}{\mathbf{P}}
\newcommand{\pic}{\operatorname{Pic}\nolimits}
\newcommand{\Spec}{\operatorname{Spec}\nolimits}
\newtheorem{theorem}{Theorem}[section]
\newtheorem{acknowledgement}[theorem]{Acknowledgement}
\newtheorem{algorithm}[theorem]{Algorithm}
\newtheorem{axiom}[theorem]{Axiom}
\newtheorem{case}[theorem]{Case}
\newtheorem{claim}[theorem]{Claim}
\newtheorem{conclusion}[theorem]{Conclusion}
\newtheorem{condition}[theorem]{Condition}
\newtheorem{conjecture}[theorem]{Conjecture}
\newtheorem{construction}[theorem]{Construction}
\newtheorem{corollary}[theorem]{Corollary}
\newtheorem{criterion}[theorem]{Criterion}
\newtheorem{definition}[theorem]{Definition}
\newtheorem{example}[theorem]{Example}
\newtheorem{exercise}[theorem]{Exercise}
\newtheorem{lemma}[theorem]{Lemma}
\newtheorem{notation}[theorem]{Notation}
\newtheorem{problem}[theorem]{Problem}
\newtheorem{proposition}[theorem]{Proposition}
\newtheorem{remark}[theorem]{Remark}
\newtheorem{solution}[theorem]{Solution}
\newtheorem{summary}[theorem]{Summary}
\newtheorem*{thm}{Theorem}

\def \bp{{\mathbf p}}
\def \bA{{\mathbf A}}
\def \bL{{\mathbf L}}
\def \bF{{\mathbf F}}
\def \bS{{\mathbf S}}
\def \bC{{\mathbf C}}

\def \Z{{\Bbb Z}}
\def \F{{\Bbb F}}
\def \C{{\Bbb C}}
\def \N{{\Bbb N}}
\def \Q{{\Bbb Q}}
\def \G{{\Bbb G}}
\def \P{{\Bbb P}}
\def \K{{\Bbb K}}
\def \E{{\Bbb E}}
\def \A{{\Bbb A}}
\def \BH{{\Bbb H}}
\def \T{{\Bbb T}}

\title[Cohen-Macaulay Auslander algebras]{Cohen-Macaulay Auslander algebras of gentle algebras}
\author[Chen]{Xinhong Chen}
\address{Department of Mathematics, Southwest Jiaotong University, Chengdu 610031, P.R.China}
\email{chenxinhong@swjtu.edu.cn}

\author[Lu]{Ming Lu$^\dag$}
\address{Department of Mathematics, Sichuan University, Chengdu 610064, P.R.China}
\email{luming@scu.edu.cn}
\thanks{$^\dag$ Corresponding author}

\subjclass[2000]{16D90,16G50,16G60}
\keywords{CM-finiteness, Cohen-Macaulay Auslander algebra, Gentle algebra, Gorenstein projective module.}

\begin{abstract}
For any gentle algebra $\Lambda=KQ/\langle I\rangle$, following Kalck, we describe the quiver and the relations for its Cohen-Macaulay Auslander algebra $\Aus(\Gproj\Lambda)$ explicitly, and obtain some properties, such as $\Lambda$ is representation-finite if and only if $\Aus(\Gproj\Lambda)$ is; if $Q$ has no loop and any indecomposable $\Lambda$-module is uniquely determined by its dimension vector, then any indecomposable $\Aus(\Gproj\Lambda)$-module is uniquely determined by its dimension vector.
\end{abstract}

\maketitle

\section{Introduction}

The concept of Gorenstein projective modules over any ring can be dated back to \cite{AB}, where Auslander and Bridger introduced the modules of $G$-dimension zero over a Noetherian rings, and is formed by Enochs and Jenda \cite{EJ}. This class of modules satisfies some good stable properties, becomes a main ingredient in the relative homological algebra, and is widely used in the representation theory of algebras and algebraic geometry, see e.g. \cite{AB,AR2,EJ,Bu,Ha1,Be}. It also plays as an important tool to study the representation theory of Gorenstein algebra, see e.g. \cite{AR2,Bu,Ha1}.

Gorenstein algebra $\Lambda$, where by definition $\Lambda$ has finite injective dimension both as a left and a right $\Lambda$-module, is inspired from commutative ring theory.
A fundamental result of Buchweitz \cite{Bu} and Happel \cite{Ha1} states that for a Gorenstein algebra $\Lambda$, its singularity category is triangle equivalent to the stable category of Gorenstein projective (also called (maximal) Cohen-Macaulay) $\Lambda$-modules, which generalizes Rickard's result \cite{Ri} on self-injective algebras.

For any Artin algebra $\Lambda$, denote by $\Gproj\Lambda$ its subcategory of Gorenstein projective modules. If $\Gproj\Lambda$ has only finitely many isomorphism classes of indecomposable objects, then $\Lambda$ is called CM-\emph{finite}. In this case, inspired by the definition of Auslander algebra, the Cohen-Macaulay Auslander algebra (also called the relative Auslander algebra) is defined to be $\End_\Lambda(\bigoplus_{i=1}^n E_i)^{op}$, where $E_1,\dots,E_n$ are all pairwise non-isomorphic indecomposable Gorenstein projective modules \cite{Be1,Be,LZ}.
A CM-finite algebra $\Lambda$ is Gorenstein if and only if $\gldim \Aus(\Gproj\Lambda)<\infty$ \cite{LZ,Be}.
Furthermore, for any two Gorenstein Artin algebras $A$ and $B$ which are CM-finite, if $A$ and $B$ are derived equivalent, then their Cohen-Macaulay Auslander algebras are also
derived equivalent \cite{P}.

As an important class of Gorenstein algebras \cite{GR}, gentle algebras were introduced in \cite{AS} as appropriate context for the investigation of algebras derived equivalent to hereditary algebras of type $\tilde{\A}_n$. Many important algebras are gentle, such as tilted algebras of type $\A_n$, algebras derived equivalent to $\A_n$-configurations of projective lines \cite{Bur} and also the cluster-tilted algebras of type $\A_n$ \cite{BMR}, and type $\tilde{\A}_n$ \cite{ABCP}.
It is interesting to notice that the class of gentle algebras is closed under derived equivalence \cite{SZ}. Recently, Kalck \cite{Ka} proves that the singularity category of an arbitrary gentle algebra is a finite product of $n$-cluster categories of type $\A_1$. From \cite{Ka}, it is easy to see that gentle algebras are CM-finite, which inspires us to study the properties of their Cohen-Macaulay Auslander algebras.

In this paper, our aim is to study the Cohen-Macaulay Auslander algebras of gentle algebras. Let $\Lambda=KQ/\langle I\rangle$ be a gentle algebra. First, we explicitly describe the quiver and relations of $\Aus(\Gproj\Lambda)=KQ^{Aus}/\langle I^{Aus}\rangle$, see Theorem \ref{main theorem 1}. Second, we prove that $\Lambda$ is representation-finite if and only if $\Aus(\Gproj\Lambda)$ is, see Theorem \ref{main theorem 2}. Third, if $Q$ has no loop, and any indecomposable $\Lambda$-module $M$ is uniquely determined by its dimension vector, then any indecomposable $\Aus(\Gproj\Lambda)$-module $N$ is uniquely determined by its dimension vector, see Theorem \ref{main theorem 3}.

It is worth pointing out that in \cite{CL} we construct a desingularization of arbitrary quiver Grassmannians for finite-dimensional Gorenstein projective modules of $1$-Gorenstein gentle algebras in terms of quiver Grassmannians for their Cohen-Macaulay Auslander algebras.

\vspace{0.2cm} \noindent{\bf Acknowledgments.}
This work is inspired by some discussions with Professor Changjian Fu. The authors thank him very much. The authors deeply thank the referee for very helpful and insightful comments.

The first author(X. Chen) was supported by the National Natural Science Foundation of China (Grant No. 11526168 and No. 11601441) and
the Fundamental Research Funds for the Central Universities (Grant No. 2682016CX109).
The corresponding author(M. Lu) was supported by the National Natural Science Foundation of China (Grant No. 11401401).

\section{Preliminaries}
Throughout this paper, we always assume that $K$ is an algebraically closed field. For any finite set $S$, we denote by $|S|$ the number of the elements in $S$. For a $K$-algebra, we always means a basic finite-dimensional associative $K$-algebra. For any algebra $A$, we denote by $\gldim A$ its \emph{global dimension}. For an additive category $\ca$, we denote by $\ind \ca$ the isomorphism classes of indecomposable objects in $\ca$.

Let $Q=(Q_0,Q_1)$ be a quiver (where $Q_0$ is the set of vertices and $Q_1$ is the set of arrows) and $\langle I\rangle$ an \emph{admissible ideal} in the path algebra $KQ$ which is generated by a set of relations $I$. Denote by $(Q,I)$ the \emph{associated bound quiver}. For any arrow $\alpha$ in $Q$ we denote by $s(\alpha)$ its starting point and by $t(\alpha)$ its ending point. An \emph{oriented path} (or path for short) of \emph{length} $r\geq1$ from $a$ to $b$ is a sequence $p=\alpha_1\alpha_2\dots \alpha_r$ of arrows $\alpha_i$ such that $t(\alpha_i)=s(\alpha_{i-1})$ for all $i=2,\dots,r$, and $s(\alpha_r)=a$, $t(\alpha_1)=b$. A path of length $r\geq1$ is called an \emph{oriented cycle} whenever its source and target coincide. An oriented cycle of length $1$ is called a \emph{loop}.

\subsection{Gentle algebras}
We first recall the definition of special biserial algebras and of gentle algebras.
\begin{definition}[\cite{SW}]
The pair $(Q,I)$ is called special biserial if it satisfies the following conditions.
\begin{itemize}
\item Each vertex of $Q$ is the starting point of at most two arrows, and ending point of at most two arrows.
\item For each arrow $\alpha$ in $Q$ there is at most one arrow $\beta$ such that $\alpha\beta\notin I$, and at most one arrow $\gamma$ such that $\gamma\alpha\notin I$.
\end{itemize}
\end{definition}
\begin{definition}[\cite{AS}]
The pair $(Q,I)$ is called gentle if it is special biserial and moreover the following holds.
\begin{itemize}
\item The set $I$ is generated by zero-relations of length $2$.
\item For each arrow $\alpha$ in $Q$ there is at most one arrow $\beta$ with $t(\beta)=s(\alpha)$ such that $\alpha\beta\in I$, and at most one arrow $\gamma$ with $s(\gamma)=t(\alpha)$ such that $\gamma\alpha\in I$.
\end{itemize}

\end{definition}
A finite-dimensional algebra $A$ is called \emph{special biserial} (resp., \emph{gentle}) if it has a presentation as $A=KQ/\langle I\rangle$ where $(Q,I)$ is special biserial (resp., gentle).

\begin{example}\label{example}
(a) Let $Q$ be the quiver as Figure 1 shows, and $I=\{\beta\alpha,\alpha\gamma_1,\gamma_1\beta\}$. Then $KQ/\langle I\rangle$ is a gentle algebra.

\setlength{\unitlength}{1.2mm}
\begin{center}
\begin{picture}(50,25)

\put(10,10){\circle{1.3}}
\put(30,10){\circle{1.3}}
\put(20,20){\circle{1.3}}
\put(11,11){\vector(1,1){8}}
\put(21,19){\vector(1,-1){8}}
\put(29,10.5){\vector(-1,0){18}}
\put(29,9.5){\vector(-1,0){18}}
\put(9,7){$1$}
\put(19,21){$2$}
\put(29,7){$3$}
\put(13,16){$\alpha$}
\put(25,16){$\beta$}
\put(19,7){$\gamma_2$}
\put(19,12){$\gamma_1$}

\put(-10,0){Figure 1. The quiver $Q$ in Example \ref{example} (a).}
\end{picture}
\vspace{0.2cm}
\end{center}

(b) Let $Q$ be the quiver as Figure 2 shows, and $I=\{\alpha\beta,\beta\alpha,\gamma^2 \}$.
Then $\Lambda=KQ/\langle I\rangle$ is a gentle algebra.

\begin{center}\setlength{\unitlength}{0.7mm}
 \begin{picture}(120,20)
 \put(0,10){\begin{picture}(80,10)
\put(40,0){\circle{2}}
\put(42,1){\vector(1,0){26}}
\put(68,-1){\vector(-1,0){26}}
\put(70,0){\circle{2}}

\qbezier(38,-2)(34,-7)(30,-2)
\qbezier(30,-2)(29,0)(30,2)
\qbezier(38,2)(34,7)(30,2)
\put(37.5,2.5){\vector(1,-1){0.5}}

\put(53,2){$\alpha$}
\put(53,-6){$\beta$}
\put(25,-1){$\gamma$}
\put(39,-6){$1$}
\put(69,-6){$2$}
\end{picture}}

\put(5,-7){Figure 2. The quiver $Q$ in Example \ref{example} (b).}
\end{picture}
\vspace{0.5cm}
\end{center}

\end{example}

A classification of indecomposable modules over gentle algebras can be deduced from the work of Ringel \cite{R} (see e.g. \cite{BR,WW}). For each arrow $\beta$, we denote by $\beta^{-1}$ the formal inverse of $\beta$ with $s(\beta^{-1})=t(\beta)$ and $t(\beta^{-1})=s(\beta)$. A word $w=c_1c_2\cdots c_n$ of arrows and their formal inverse is called a \emph{string} of length $n\geq1$ if $c_{i+1}\neq c_i^{-1}$, $s(c_i)=t(c_{i+1})$ for all $1\leq i\leq n-1$, and no subword nor its inverse is in $I$. We define $(c_1c_2\cdots c_n)^{-1}=c_n^{-1}\cdots c_2^{-1} c_1^{-1}$, and $s(c_1c_2\cdots c_n)=s(c_n)$, $t(c_1c_2\cdots c_n)=t(c_1)$.
We denote the length of $w$ by $l(w)$.
In addition, we also want to have strings of length $0$; be definition, for any vertex $u\in Q_0$, there will be two strings of length $0$, denoted by $1_{(u,1)}$ and $1_{(u,-1)}$, with both
$s(1_{(u,i)})=u=t(1_{(u,i)})$ for $i=-1,1$, and we define $(1_{(u,i)})^{-1}=1_{(u,-i)}$.
We also denote by $\cs(\Lambda)$ the set of all strings over $\Lambda=KQ/\langle I\rangle$.

\begin{remark}\label{lemma string not equal to its inverse}
For any string $w\in\cs(\Lambda)$, we have $w\neq w^{-1}$.
\end{remark}
\begin{proof}
If $w$ is of length zero, then $w=1_{(u,i)}$ for $i=1$ or $-1$, and $w^{-1}=1_{u,-i}$ which is different to $w$ by the definition.

If $l(w)=n\geq1$, then we assume that $w= c_1c_2\cdots c_n$. So $w^{-1}=c_n^{-1}\cdots c_{2}^{-1}c_1^{-1}$. Suppose for a contradiction that $w=w^{-1}$, which means $c_j=c_{n-j+1}^{-1}$ for $j=1,\dots,n$. If $n=2k$ for some integer $k$, then $c_k=c_{k+1}^{-1}$, a contradiction to the definition of strings. If $n=2k+1$ for some integer $k$, then
$c_{k+1}=c_{k+1}^{-1}$, which yields a contradiction. So $w\neq w^{-1}$.
\end{proof}

A \emph{band} $b=\alpha_1\alpha_2\cdots \alpha_{n-1} \alpha_n$ is defined to be a string $b$ with $t(\alpha_1)=s(\alpha_n)$ such that each power $b^m$ is a string, but $b$ itself is not a proper power of any strings. We denote by $\cb(\Lambda)$ the set of all bands over $\Lambda$.

On $\cs(\Lambda)$, we consider the equivalence relation $\rho$ which identifies every string $C$ with its inverse $C^{-1}$. On $\cb(\Lambda)$, we consider the equivalence relation $\rho'$ which identifies every string $C=c_1\dots c_n$ with the cyclically permuted strings $C_{(i)}=c_ic_{i+1}\cdots c_nc_1\cdots c_{i-1}$ and their inverses $C_{(i)}^{-1}$, $1\leq i\leq n$. We choose a complete set $\underline{\cs}(\Lambda)$ of representatives of $\cs(\Lambda)$ relative to $\rho$, and a complete set $\underline{\cb}(\Lambda)$ of representatives of $\cb(\Lambda)$ relative to $\rho'$.

Butler and Ringel showed that each string $w$ defines a unique string module $M(w)$, each band $b$ yields a family of band modules $M(b,m,\phi)$ with $m\geq1$ and $\phi\in \Aut(K^m)$.
Equivalently, one can consider certain quiver morphism $\sigma:S\rightarrow Q$ (for strings) and $\beta:B\rightarrow Q$ (for bands), where $S$ and $B$ are of types $\A_n$ and $\tilde{\A}_n$, respectively. Then string and band modules are given as pushforwards $\sigma_*(M)$ and $\beta_*(R)$ of indecomposable $KS$-modules $M$ and indecomposable regular $KB$-modules $R$, respectively (see e.g. \cite{WW}).
Let $\underline{\Aut}(K^m)$ be a complete set of representatives of indecomposable automorphisms of $K$-spaces with respect to similarity.

\begin{theorem}[\cite{BR}]
The modules $M(w)$ with $w\in \underline{\cs}(\Lambda)$, and the modules $M(b,m,\phi)$ with $b\in \underline{\cb}(\Lambda)$, $m\geq1$ and $\phi\in \underline{\Aut}(K^m)$, provide a complete list of indecomposable (and pairwise non-isomorphic) $\Lambda$-modules.
\end{theorem}

In practice, a string $w$ is of form $\alpha_1^{\epsilon_1} \alpha_2^{\epsilon_2}\cdots \alpha_n^{\epsilon_n}$ for $\alpha_i\in Q_1$ and $\epsilon_i=\pm1$ for all $1\leq i\leq n$. So $w$ can be viewed as a walk in $Q$:
\[\xymatrix{ w:\quad b_1 \ar@{-}[r]^{\quad\alpha_1} &b_2 \ar@{-}[r]^{\alpha_2}& \cdots \ar@{-}[r]^{\alpha_{n-1}}& b_n\ar@{-}[r]^{\alpha_n} &b_{n+1},}\]
where $b_1,b_2\dots,b_{n+1}$ are vertices of $Q$ and $\alpha_i$ is an arrow from $b_{i+1}$ to $b_i$ if $\epsilon_i=1$, or an arrow from $b_i$ to $b_{i+1}$ if $\epsilon_i=-1$, for each $1\leq i\leq n$ . In this way, the equivalence relation $\rho$ induces that
\[\xymatrix{ w:\quad b_1 \ar@{-}[r]^{\quad\alpha_1} &b_2 \ar@{-}[r]^{\alpha_2}& \cdots \ar@{-}[r]^{\alpha_{n-1}}& b_n\ar@{-}[r]^{\alpha_n\quad} &b_{n+1},}\]
is equivalent to
\[\xymatrix{ w^{-1}:\quad b_{n+1} \ar@{-}[r]^{\quad\quad\quad\alpha_n} &b_n \ar@{-}[r]^{\alpha_{n-1}}& \cdots \ar@{-}[r]^{\alpha_{2}}& b_2\ar@{-}[r]^{\alpha_1} &b_1.}\]
It is similar to interpret $\rho'$ if $w$ is a band. We denote by $v\sim w$ for any two strings $v,w$ if $v$ is equivalent to $w$ under $\rho$.

For any string $w=c_1\dots c_n$, or $w=1_{(u,j)}$, let $u_w(i)=t(c_{i+1})$, $0\leq i<n$, and $u_w(n)=s(w)=s(c_n)$. Given a vertex $v\in Q_0$, let $I_w(v)=\{ i|u_w(i)=v\}\subseteq\{0,1,\dots,n\}$.
Denote by $k_w(v)=|I_w(v)|$.
We associate a vector $(k_w(v) )_{v\in Q_0}$ to the string $w$, which is denoted by $\dimv w$, and call it the \emph{dimension vector} of $w$.
From \cite{BR}, we get that $\dimv w=\dimv M(w)$.

Note that if a gentle algebra $\Lambda$ is representation-finite, then there is no band module in $\mod \Lambda$, and so all the indecomposable modules over $\Lambda$ are string modules.

\subsection{Singularity categories and Gorenstein algebras}
Let $\Lambda$ be a finite-dimensional $K$-algebra. Let $\mod \Lambda$ be the category of finitely generated left $\Lambda$-modules, and $\proj \Lambda$ the subcategory of finitely generated projective $\Lambda$-modules. For an arbitrary $\Lambda$-module $_\Lambda X$, we denote by $\pd_\Lambda X$ (resp. $\id_\Lambda X$) the projective dimension (resp. the injective dimension) of the module $_\Lambda X$. A $\Lambda$-module $G$ is \emph{Gorenstein projective}, if there is an exact sequence $$P^\bullet:\cdots \rightarrow P^{-1}\rightarrow P^0\xrightarrow{d^0}P^1\rightarrow \cdots$$ of projective $\Lambda$-modules, which stays exact under $\Hom_\Lambda(-,\Lambda)$, and such that $G\cong \Ker d^0$. We denote by $\Gproj(\Lambda)$ the subcategory of Gorenstein projective $\Lambda$-modules.

\begin{definition}[\cite{AR1,AR2,Ha1}]
A finite-dimensional algebra $\Lambda$ is called a Gorenstein (or Iwanaga-Gorenstein) algebra if $\Lambda$ satisfies $\id \Lambda_\Lambda<\infty$ and $\id_\Lambda \Lambda<\infty$.
\end{definition}

Observe that for a Gorenstein algebra $\Lambda$, we have $\id _\Lambda\Lambda=\id \Lambda_\Lambda$, see e.g. \cite[Lemma 6.9]{Ha1}; the common value is denoted by $\Gd \Lambda$. If $\Gd \Lambda\leq d$, we say that $\Lambda$ is \emph{$d$-Gorenstein}.

For an algebra $\Lambda$, the \emph{singularity category} of $\Lambda$ is defined to be the quotient category $D_{sg}^b(\Lambda):=D^b(\Lambda)/K^b(\proj \Lambda)$ \cite{Bu,Ha1,Or1}. Note that $D_{sg}^b(\Lambda)$ is zero if and only if $\gldim \Lambda<\infty$ \cite{Ha1}.

\begin{theorem}[\cite{Bu,Ha1}]
Let $\Lambda$ be a finite-dimensional algebra. Then $\Gproj (\Lambda)$ is a Frobenius category with the projective modules as the projective-injective objects. If $\Lambda$ is Gorenstein, then the stable category $\underline{\Gproj}(\Lambda)$ is triangle equivalent to the singularity category $D^b_{sg}(\Lambda)$ of $\Lambda$.
\end{theorem}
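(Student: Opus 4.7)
The plan is to split the theorem into its two assertions. The first equips $\Gproj(\Gamma)$ with the exact structure inherited from $\mod\Gamma$---conflations being short exact sequences with all three terms Gorenstein projective---and verifies the Frobenius axioms. The second realizes the equivalence via the natural composition $\Gproj(\Gamma)\hookrightarrow\mod\Gamma\hookrightarrow D^b(\mod\Gamma)\to D^b_{sg}(\Gamma)$, which kills projective $\Gamma$-modules and thus descends to a triangulated functor $F\colon\underline{\Gproj}(\Gamma)\to D^b_{sg}(\Gamma)$ that I then show is an equivalence.

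For the Frobenius part, closure of $\Gproj(\Gamma)$ under extensions follows from a horseshoe argument applied to complete projective resolutions. Projective $\Gamma$-modules are trivially Gorenstein projective, and the defining identity $\Ext^{\geq 1}_\Gamma(G,P)=0$ for $G\in\Gproj(\Gamma)$ and $P$ projective immediately implies that any conflation $0\to P\to G_1\to G_2\to 0$ splits, so projectives are injective objects of $\Gproj(\Gamma)$. Enough projectives comes from the backward half of a complete resolution of $G$, producing a conflation $0\to\Omega G\to P\to G\to 0$ with $P$ projective and $\Omega G\in\Gproj(\Gamma)$; dually the forward half yields enough injectives $0\to G\to P^0\to\Omega^{-1}G\to 0$. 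Conversely, any injective $I\in\Gproj(\Gamma)$ splits off its embedding into $P^0$ and is therefore projective. This step uses nothing more than the definition of Gorenstein projective and identifies the projective-injective objects with the ordinary projective $\Gamma$-modules.

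For the equivalence, $F$ is triangulated because each conflation in $\Gproj(\Gamma)$ produces matching triangles in $\underline{\Gproj}(\Gamma)$ and in $D^b_{sg}(\Gamma)$. Essential surjectivity uses the Gorenstein hypothesis crucially: since $\Gd\Gamma<\infty$, every $M\in\mod\Gamma$ admits a short exact sequence $0\to M'\to G\to M\to 0$ with $G\in\Gproj(\Gamma)$ and $\pd_\Gamma M'<\infty$, so $M\cong G$ in $D^b_{sg}(\Gamma)$; an iterated argument on cohomological amplitude then covers arbitrary bounded complexes. Full faithfulness reduces to the identification $\Hom_{D^b_{sg}(\Gamma)}(G,G')\cong\underline{\Hom}_\Gamma(G,G')$ for $G,G'\in\Gproj(\Gamma)$: a roof $G\xleftarrow{s} X\to G'$ whose mapping cone of $s$ is perfect can be straightened to an honest module map by repeatedly using the vanishing of $\Ext^{\geq 1}_\Gamma(G,P)$ on projectives, while any morphism factoring through a projective vanishes in the singularity category.

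The main obstacle I foresee is the full faithfulness step. Given a roof presentation of a morphism in $D^b_{sg}(\Gamma)$, one has to dismantle the perfect complex in the denominator one projective at a time, each step invoking $\Ext^{\geq 1}_\Gamma(G,P)=0$, and then verify that the resulting lift is both well-defined and unique up to a map factoring through a projective. Carrying out this bookkeeping is the technical heart of the theorem and is the point where the Gorenstein hypothesis and the Frobenius structure built in the first part must be used together most delicately.
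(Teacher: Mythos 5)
This theorem is quoted in the paper as a known result of Buchweitz and Happel (\cite{Bu,Ha1}) and the paper supplies no proof of its own, so there is no in-paper argument to compare against; I can only assess your sketch on its merits. Your outline is the standard one and is essentially correct. The Frobenius part is complete as stated: closure under extensions, $\Ext^{\geq 1}_\Gamma(G,P)=0$ forcing projectives to be injective objects, the two halves of a complete resolution giving enough projectives and enough injectives, and the splitting argument identifying the injectives with the projectives --- and you are right that none of this uses the Gorenstein hypothesis on $\Gamma$. Two places deserve more care than your sketch gives them. First, to see that $F$ is triangulated you must exhibit a natural isomorphism $F(\Omega^{-1}G)\cong F(G)[1]$ coming from the conflation $0\to G\to P^0\to\Omega^{-1}G\to 0$ (whose middle term dies in $D^b_{sg}(\Gamma)$); "matching triangles" alone does not produce the required commutation with suspension. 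Second, the full-faithfulness step as you describe it (dismantling the perfect cone of the denominator one projective at a time) is workable but delicate; Buchweitz's original route sidesteps the roof bookkeeping by identifying both $\underline{\Hom}_\Gamma(G,G')$ and $\Hom_{D^b_{sg}(\Gamma)}(G,G')$ with morphism spaces in the homotopy category of totally acyclic complexes of projectives, and you may find that cleaner to execute. For essential surjectivity, an equally standard shortcut is to note that for $d=\Gd\Gamma$ the $d$-th syzygy $\Omega^d M$ of any module $M$ is Gorenstein projective and $M\cong(\Omega^d M)[d]$ in $D^b_{sg}(\Gamma)$, which handles bounded complexes by truncation at once. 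None of these points is a gap in the sense of a wrong idea; they are the places where the writing would have to become precise.
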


An algebra is of \emph{finite Cohen-Macaulay type}, or simply, \emph{CM-finite}, if there are only finitely many isomorphism classes of indecomposable finitely generated Gorenstein projecitve modules. Clearly, $\Lambda$ is CM-finite if and only if there is a finitely generated module $E$ such that $\Gproj \Lambda=\add E$. In this way, $E$ is called to be a \emph{Gorenstein projective generator}. If the global dimension of $\Lambda$ is finite, then $\Gproj \Lambda=\proj \Lambda$, which implies that $\Lambda$ is CM-finite. If $\Lambda$ is self-injective, then $\Gproj \Lambda=\mod \Lambda$, so $\Lambda$ is CM-finite if and only if $\Lambda$ is representation-finite.

Let $\Lambda$ be a CM-finite algebra, $E_1,\dots,E_n$ all the pairwise non-isomorphic indecomposable Gorenstein projective $\Lambda$-modules. Put $E=\oplus_{i=1}^n E_i$. Then $E$ is a Gorenstein projective generator. We call $\Aus(\Gproj \Lambda):=(\End_\Lambda E)^{op}$ the \emph{Cohen-Macaulay Auslander algebra} (also called \emph{relative Auslander algebra}) of $\Lambda$.

Gei{\ss} and Reiten \cite{GR} prove that gentle algebras are Gorenstein algebras, so their Cohen-Macaulay Auslander algebras have finite global dimensions \cite{LZ}. The singularity category of a gentle algebra is characterized by Kalck in \cite{Ka}, we recall it as follows.
For a gentle algebra $\Lambda=KQ/\langle I\rangle$, we denote by $\cc(\Lambda)$ the set of equivalence classes (with respect to cyclic permutation) of \emph{repetition-free} cyclic paths $\alpha_1\dots\alpha_n$ in $Q$ such that $\alpha_i\alpha_{i+1}\in I$ for all $i$, where we set $n+1=1$. Moreover, we set $l(c)$ to be the \emph{length} of the cycle $c\in\cc(\Lambda)$, i.e. $l(\alpha_1\dots\alpha_n)=n$.

For every arrow $\alpha\in Q_1$, there is at most one cycle $c\in\cc(\Lambda)$ containing $\alpha$. In fact, if there are two different elements $c,c'\in \cc(\Lambda)$ such that $\alpha$ lies on both of them, then the definition of $\cc(\Lambda)$ implies that there exist arrows $\beta$, $\gamma_1$ and $\gamma_2$ such that $\gamma_1\neq \gamma_2$, $s(\gamma_1)=t(\beta)=s(\gamma_2)$ and $\gamma_1\beta,\gamma_2\beta\in I$, a contradiction to that $\Lambda$ is gentle.  We define $R(\alpha)$ to be the \emph{left ideal} $\Lambda \alpha$ generated by $\alpha$.
It follows from the definition of gentle algebras that this is a direct summand of the radical $\rad P_{s(\alpha)}$ of the indecomposable projective $\Lambda$-module $P_{s(\alpha)}=\Lambda e_{s(\alpha)}$, where $e_{s(\alpha)}$ is the idempotent corresponding to $s(\alpha)$. In fact, all radical summands of indecomposable projective modules arise in this way, see e.g. \cite{Ka}.

\begin{theorem}[\cite{Ka}]\label{theorem Kalck}
Let $\Lambda=KQ/\langle I\rangle$ be a gentle algebra. Then

\emph{(i)} $\ind \Gproj(\Lambda)=\ind \proj \Lambda \bigcup \{R(\alpha_1),\dots, R(\alpha_n)|c=\alpha_1\cdots \alpha_n\in\cc(\Lambda)\}$.

\emph{(ii)} There is an equivalence of triangulated categories
$$D^b_{sg}(\Lambda)\simeq \prod_{c\in\cc(\Lambda)} \frac{D^b( K)}{[l(c)]} ,$$
where $ D^b(K)/[l(c)]$ denotes the triangulated orbit category, see \cite{Ke}.
\end{theorem}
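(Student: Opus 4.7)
The plan is to exploit the fact, due to Gei{\ss}--Reiten \cite{GR}, that every gentle algebra is Gorenstein, so that a module $M$ is Gorenstein projective exactly when $M$ admits a complete projective resolution, equivalently when $\Ext^i_\Lambda(M,\Lambda)=0$ for all $i>0$ and the same holds for every cosyzygy of $M$. Since every indecomposable $\Lambda$-module is a string or a band module, part (i) reduces to deciding which strings and bands are Gorenstein projective.

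For part (i), I would first analyze the minimal projective resolution of each $R(\alpha)=\Lambda\alpha$. As a direct summand of $\rad P_{s(\alpha)}$, $R(\alpha)$ is a string module with projective cover $P_{t(\alpha)}$, whose kernel is a direct sum whose only possibly non-projective summand is $R(\beta)$, where $\beta$ is the unique arrow (if any) with $\alpha\beta\in I$. Iterating: if $\alpha=\alpha_i$ lies in some cycle $c=\alpha_1\cdots\alpha_n\in\cc(\Lambda)$, the resolution is periodic of period $n$, cycling through $R(\alpha_i),R(\alpha_{i+1}),\ldots$; otherwise it terminates in a projective, so $\pd_\Lambda R(\alpha)<\infty$. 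Applying the dual argument over $\Lambda^{\mathrm{op}}$ (again gentle) shows the glued two-sided complex stays exact under $\Hom_\Lambda(-,\Lambda)$, hence $R(\alpha_i)$ is Gorenstein projective. To see that these together with the indecomposable projectives exhaust $\ind\Gproj(\Lambda)$, I would show that any other string module eventually has a projective syzygy via the same iterative analysis, and rule out band modules using that the syzygy of a band is again a band while $\Ext^1_\Lambda(B,\Lambda)\neq 0$ for non-projective bands.

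For part (ii), I would apply the Buchweitz--Happel equivalence $\underline{\Gproj}(\Lambda)\simeq D^b_{sg}(\Lambda)$. By (i), the left-hand category is additively generated by the stable classes of the $R(\alpha_i)$. A stable morphism between $R(\alpha)$ and $R(\beta)$ lying in distinct cycles must factor through a projective, and inspection of the composition structure of the indecomposable projectives rules this out, so $\underline{\Gproj}(\Lambda)$ decomposes as a product indexed by $c\in\cc(\Lambda)$. Fixing $c=\alpha_1\cdots\alpha_n$, the syzygy functor $\Omega$ cyclically permutes the $R(\alpha_i)$, yielding $\Omega^n\cong\Id$ on the corresponding block. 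Computing the stable endomorphism algebra of $\bigoplus_{i=1}^n R(\alpha_i)$ then identifies this block with the triangulated orbit category $D^b(K)/[n]$.

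The main obstacle will be the exhaustiveness in (i): ruling out every other string and, in particular, every band module as Gorenstein projective demands a careful case analysis of how the gentle relations propagate through the string/band combinatorics under the syzygy functor. Once (i) is established, the block decomposition and the orbit-category identification in (ii) follow largely formally from the $\Omega$-periodicity.
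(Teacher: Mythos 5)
First, a caveat: the paper contains no proof of this statement---it is quoted from Kalck's article \cite{Ka}---so your attempt can only be measured against Kalck's own argument. Your overall strategy does reconstruct it: the short exact sequences $0\to R(\gamma)\to P_{t(\alpha)}\to R(\alpha)\to 0$ (with $\gamma$ the unique arrow with $\gamma\alpha\in I$), the resulting $\Omega$-periodicity of the $R(\alpha_i)$ along a cycle $c\in\cc(\Lambda)$, the Buchweitz--Happel equivalence, and the block decomposition of $\underline{\Gproj}(\Lambda)$ are exactly the ingredients. (Your detour through $\Lambda^{\mathrm{op}}$ is unnecessary: over a Gorenstein algebra $\Omega^{l(c)}M\cong M$ already forces $M$ to be Gorenstein projective, since $\Gproj$ coincides with $d$-th syzygies for $d=\Gd\Lambda$.)

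There are, however, two genuine gaps. For exhaustiveness in (i), the plan ``show that any other string module eventually has a projective syzygy'' cannot work: any module whose syzygy tower reaches one of the $R(\alpha_i)$ on a cycle has infinite projective dimension yet is not Gorenstein projective, so finiteness of resolutions is not the dividing line. The argument that actually closes this (and is the crux of Kalck's proof) is the lemma that over a gentle algebra the first syzygy of an \emph{arbitrary} module is a direct sum of modules $R(\alpha)$, $\alpha\in Q_1$, and projectives; since every Gorenstein projective module is a first syzygy of its cosyzygy, every non-projective indecomposable of $\Gproj\Lambda$ must be some $R(\alpha)$, and $\pd_\Lambda R(\alpha)<\infty$ for $\alpha$ off the cycles of $\cc(\Lambda)$ (the successor map $\alpha\mapsto\gamma$ is a partial injection, so its orbits are chains or pure cycles) eliminates the rest. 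This lemma also disposes of band modules for free; your proposed band argument is broken as stated, since the syzygy of a band module is in general a direct sum of string modules rather than a band (already for Kronecker-type bands), and the claim $\Ext^1_\Lambda(B,\Lambda)\neq0$ is unsubstantiated. In (ii), computing $\underline{\End}(\bigoplus_i R(\alpha_i))\cong K^{\times l(c)}$ together with the cyclic action of $\Omega$ determines the block only as an additive category with autoequivalence; a triangulated structure is not determined by this data, so to identify the block with the orbit category $D^b(K)/[l(c)]$ you still need a recognition theorem for algebraic triangulated categories with finitely many indecomposables (Amiot, Xiao--Zhu) or Keller's realization theorem for orbit categories, and that step must be made explicit. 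You should also check that $\underline{\Hom}(R(\alpha_i),R(\alpha_j))=0$ for $i\neq j$ \emph{within} the same cycle, not only across distinct cycles.
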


From Theorem \ref{theorem Kalck} or its proof in \cite{Ka}, we get that $\underline{\Gproj}(\Lambda)\simeq D^b_{sg}(\Lambda)$ is equivalent to a semisimple abelian category and therefore itself is semisimple abelian. In particular, $\underline{\Hom}_\Lambda(R(\alpha),R(\alpha'))\cong \delta_{\alpha \alpha'}K$ for any two non-projective indecomposable Gorenstein projective modules $R(\alpha),R(\alpha')$.

\section{Cohen-Macaulay Auslander algebras of gentle algebras}
Let $\Lambda=KQ/\langle I\rangle$ be a gentle algebra. It is easy to get the following lemma.
\begin{lemma}
Let $\Lambda=KQ/\langle I\rangle$ be a gentle algebra. Then $\Lambda$ is CM-finite.
\end{lemma}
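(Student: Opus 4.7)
The plan is to deduce this directly from Theorem \ref{theorem Kalck}(i), which already gives us an explicit enumeration of the indecomposable Gorenstein projective $\Lambda$-modules. Since $\Lambda$ is finite dimensional, the quiver $Q$ has only finitely many vertices, so $\ind \proj \Lambda$ is finite. Therefore, to conclude CM-finiteness, it suffices to observe that the second set in the union, namely $\{R(\alpha_1), \ldots, R(\alpha_n) \mid c = \alpha_1 \cdots \alpha_n \in \cc(\Lambda)\}$, is also finite.

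First I would note that $Q$ has only finitely many arrows (again because $\Lambda$ is finite dimensional). By the very definition of $\cc(\Lambda)$, each cycle $c \in \cc(\Lambda)$ is repetition-free, so its length $l(c)$ is bounded by the number $|Q_1|$ of arrows. Moreover, every arrow $\alpha \in Q_1$ belongs to at most one cycle in $\cc(\Lambda)$ (this is recorded in the excerpt just before Theorem \ref{theorem Kalck}, where $R(\alpha)$ is defined), which implies $\sum_{c \in \cc(\Lambda)} l(c) \le |Q_1|$. In particular $\cc(\Lambda)$ itself is a finite set, and the union of the arrows occurring in cycles is also a finite set.

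Consequently, the total number of indecomposable Gorenstein projective modules of the form $R(\alpha)$ is at most $|Q_1|$, hence finite. Combined with the finiteness of $\ind \proj \Lambda$, Theorem \ref{theorem Kalck}(i) yields that $\ind \Gproj(\Lambda)$ is a finite set, which by definition means that $\Lambda$ is CM-finite.

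There is essentially no obstacle here: the whole argument is a bookkeeping remark once Kalck's classification is in hand. The only thing to be slightly careful about is the use of the repetition-free condition in the definition of $\cc(\Lambda)$, which is what keeps individual cycle lengths — and hence the total count — bounded in terms of the finite invariant $|Q_1|$.
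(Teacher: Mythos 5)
Your proposal is correct and follows essentially the same route as the paper: both deduce the claim from Theorem \ref{theorem Kalck}(i), using the finiteness of $Q_1$ and the fact that each arrow lies in at most one cycle of $\cc(\Lambda)$ to bound the number of modules $R(\alpha)$. Your write-up merely spells out the bookkeeping in slightly more detail than the paper does.
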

\begin{proof}
From Theorem \ref{theorem Kalck}, we get that
$$\ind \Gproj(\Lambda)=\ind \proj \Lambda \bigcup \{R(\alpha_1),\dots, R(\alpha_n)|c=\alpha_1\cdots \alpha_n\in\cc(\Lambda)\}.$$
So every non-projective indecomposable Gorenstein projective $\Lambda$-module is of form $R(\alpha)$ for some arrow $\alpha$. Furthermore, there are only finitely many arrows, and then $\Lambda$ is CM-finite.
\end{proof}

From $\Lambda$, we construct a bound quiver $(Q^{Aus},I^{Aus})$ as follows:

$\bullet$ the set of vertices $Q^{Aus}_0:=Q_0\bigsqcup Q_1^{cyc}$, where $Q_1^{cyc}=\{\alpha| \exists\,c\in\cc(\Lambda) \mbox{ such that } \alpha \mbox{ lies on } c\}$;

$\bullet$ the set of arrows $Q^{Aus}_1:=Q_1^{ncyc}\bigsqcup (Q_1^{cyc})^{\pm}$, where $Q_1^{ncyc}=Q_1\setminus Q_1^{cyc}$ (i.e. arrows do not lie on any cyclic paths in $\cc(\Lambda)$), $(Q_1^{cyc})^{+}=\{\alpha^+:s(\alpha)\rightarrow \alpha | \alpha\in Q_1^{cyc} \}$ and
$(Q_1^{cyc})^{-}=\{\alpha^-:\alpha\rightarrow t(\alpha) | \alpha\in Q_1^{cyc} \}$.

$\bullet$ the set of relations $I^{Aus}:= \{\beta^+\alpha^-| \beta\alpha \in I\mbox{ with }\alpha,\beta\in Q_1^{cyc}\}\bigcup\{\beta\alpha|\beta\alpha\in I\mbox{ with }\alpha,\beta\in Q_1^{ncyc}\}$.

Note that if $\cc(\Lambda)=\emptyset$, then $(Q^{Aus},I^{Aus})=(Q,I)$.

In this section, we prove that $KQ^{Aus}/ \langle I^{Aus}\rangle$ is isomorphic to the Cohen-Macaulay Auslander algebra of the gentle algebra $\Lambda=KQ/\langle I\rangle$.

\begin{example}\label{example 3}
(a) Keep the notations as in Example \ref{example} (a). Then the quiver $Q^{Aus}$ of the gentle algebra $KQ/\langle I\rangle$ is as Figure 3 shows, and $I^{Aus}=\{\alpha^+\gamma_1^-,\beta^+\alpha^-,\gamma_1^+\beta^-\}$.
\setlength{\unitlength}{1.2mm}
\begin{center}
\begin{picture}(220,25)

\put(50,10){\circle{1.3}}
\put(50,11){\vector(0,1){8}}
\put(60,25){\circle{1.3}}
\put(60,5){\circle{1.3}}
\put(51,20.5){\vector(2,1){8}}
\put(50,20){\circle{1.3}}
\put(61,24.5){\vector(2,-1){8}}

\put(70,10){\circle{1.3}}
\put(69,10){\vector(-1,0){18}}
\put(69,9.5){\vector(-2,-1){8}}
\put(70,20){\circle{1.3}}
\put(70,19){\vector(0,-1){8}}
\put(59,5.5){\vector(-2,1){8}}

\put(47,9){\tiny$1$}
\put(47,19){\tiny$\alpha$}
\put(59,26){\tiny$2$}
\put(59,2.5){\tiny$\gamma_1$}
\put(71,19){\tiny$\beta$}
\put(71,9){\tiny$3$}
\put(50.5,14){\tiny$\alpha^+$}
\put(54,20.5){\tiny$\alpha^-$}
\put(62.5,20){\tiny$\beta^+$}
\put(66.5,14){\tiny$\beta^-$}
\put(62.5,8){\tiny$\gamma_1^+$}
\put(54.5,8){\tiny$\gamma_1^-$}
\put(58,11){\tiny $\gamma_2$}
\put(20,-3){Figure 3. The quiver $Q^{Aus}$ of $KQ/\langle I\rangle$ for Example \ref{example} (a).}
\end{picture}
\vspace{0.2cm}
\end{center}

(b) Keep the notations as in Example \ref{example} (b). Then the quiver $Q^{Aus}$ of the gentle algebra $KQ/\langle I\rangle$ is as Figure 4 shows, and $I^{Aus}=\{ \gamma^+\gamma^-,\alpha^+\beta^-, \beta^+\alpha^- \}$.

\begin{center}\setlength{\unitlength}{0.7mm}
 \begin{picture}(150,32)
 \put(0,10){\begin{picture}(80,10)
\put(40,0){\circle{2}}
\put(42,1){\vector(1,0){26}}
\put(68,-1){\vector(-1,0){26}}
\put(70,0){\circle{2}}

\put(72,2){\vector(1,1){12}}
\put(86,16){\circle{2}}
\put(86,-16){\circle{2}}

\put(88,14){\vector(1,-1){12}}
\put(102,0){\circle{2}}
\put(100,-2){\vector(-1,-1){12}}
\put(84,-14){\vector(-1,1){12}}

\put(53,2){$\gamma^-$}
\put(53,-6){$\gamma^+$}
\put(35,-1){$\gamma$}
\put(73,8){$\alpha^+$}
\put(84,18){$\alpha$}
\put(72,-12){$\beta^-$}
\put(93,-12){$\beta^+$}
\put(94,8){$\alpha^-$}
\put(69,-6){$1$}
\put(104,-1){$2$}
\put(84,-22){$\beta$}
\end{picture}}

\put(15,-21){Figure 4. The quiver $Q^{Aus}$ of $KQ/\langle I\rangle$ for Example \ref{example} (b).}
\end{picture}
\vspace{1.5cm}
\end{center}
\end{example}

For any two $\Lambda$-modules $M,N$ and any subcategory $\cd$ of $\mod \Lambda$ containing $M,N$, we denote by $\irr_{\cd}(M,N)$ the space of irreducible morphisms from $M$ to $N$ in $\cd$.

From Theorem \ref{theorem Kalck}, we get that
$$\ind \Gproj(\Lambda)=\ind \proj \Lambda \bigcup \{R(\alpha_1),\dots, R(\alpha_n)|c=\alpha_1\cdots \alpha_n\in\cc(\Lambda)\}.$$
Furthermore, let $c\in\cc(\Lambda)$ be a cycle, which we label as follows: $1\xrightarrow{\alpha_1} 2\xrightarrow{\alpha_2} \cdots \xrightarrow{\alpha_{n-1}}n\xrightarrow{\alpha_n}1$.
Then from the proof of \cite[Theorem 2.5]{Ka}, there are short exact sequences
\begin{equation}\label{equation 1}
0\rightarrow R(\alpha_{i})\xrightarrow{a_i} P_i\xrightarrow{b_i} R(\alpha_{i-1}) \rightarrow0,
\end{equation}
for all $i=1,\dots,n$, where we set $\alpha_0=\alpha_n$.

\begin{lemma}\label{lemma irreducible morphism of Gorenstein projective modules}
Keep the notations as above. Then $a_i,b_i$ in sequence (\ref{equation 1}) are irreducible morphisms in $\Gproj\Lambda$ for all $i=1,\dots,n$. Furthermore,

\emph{(i)} $$\dim_K\irr_{\Gproj\Lambda}(R(\alpha_i),P_i))=1 \mbox{ and } \dim_K\irr_{\Gproj\Lambda}(P_i,R(\alpha_{i-1}))=1,$$
for all $i=1,\dots,n$.

\emph{(ii)} For any indecomposable projective module $P$ not isomorphic to $P_i$, we have $$\irr_{\Gproj\Lambda}(R(\alpha_i),P))=0 \mbox{ and } \irr_{\Gproj\Lambda}(P,R(\alpha_{i-1}))=0,$$
for all $i=1,\dots,n$.

\emph{(iii)} For any two non-projective indecomposable Gorenstein projective modules $R(\alpha)$ and $R(\alpha')$, we have $\irr_{\Gproj\Lambda}(R(\alpha),R(\alpha'))=0$.
\end{lemma}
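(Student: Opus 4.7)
My plan is to identify the short exact sequence (\ref{equation 1}) as the Auslander--Reiten sequence in $\Gproj\Lambda$ ending at $R(\alpha_{i-1})$, and then deduce (i)--(iii) from the standard dictionary between AR sequences and irreducible morphisms.

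For the identification: $\Gproj\Lambda$ is a CM-finite Krull--Schmidt Frobenius category, with indecomposable projective-injectives the indecomposable projective $\Lambda$-modules, hence admits AR sequences. Since $R(\alpha_{i-1})=\Lambda\alpha_{i-1}$ is top-generated at vertex $t(\alpha_{i-1})=i$, the map $b_i:P_i\to R(\alpha_{i-1})$ is the projective cover in $\mod\Lambda$ with kernel $R(\alpha_i)$; in other words, (\ref{equation 1}) is exactly the projective cover sequence. To show $b_i$ is right almost split I would verify that every non-split epimorphism $f:Y\to R(\alpha_{i-1})$ from an indecomposable $Y\in\ind\Gproj\Lambda$ factors through $b_i$, splitting into three cases from Theorem \ref{theorem Kalck}. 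If $Y$ is projective, the lifting property of projectives gives the factorization through $b_i$. If $Y=R(\alpha)$ is non-projective and not isomorphic to $R(\alpha_{i-1})$, then $\underline{\Hom}(R(\alpha),R(\alpha_{i-1}))=0$ because $\Hom$-spaces between distinct indecomposables in each orbit factor $D^b(K)/[l(c)]$ of $\underline{\Gproj}\Lambda$ vanish; so $f$ factors through some projective and hence through $b_i$ by lifting. For $Y=R(\alpha_{i-1})$, a direct string-module calculation shows $\End_\Lambda(R(\alpha_{i-1}))=K$ (the $e_i$-component of $\Lambda\alpha_{i-1}$ is one-dimensional because the gentle relation $\alpha_i\alpha_{i-1}\in I$ forbids the cycle from closing up), so every non-isomorphism endomorphism is zero and factors trivially. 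Combined with the indecomposability of $R(\alpha_i)$, this proves that (\ref{equation 1}) is the AR sequence ending at $R(\alpha_{i-1})$, and therefore that $a_i$ and $b_i$ are irreducible in $\Gproj\Lambda$.

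Given this AR sequence, parts (i)--(iii) follow from the standard fact that for an AR sequence $0\to A\to B\to C\to 0$ in a Krull--Schmidt exact category and any indecomposable $X$, the multiplicity of $X$ as a summand of $B$ equals $\dim_K\irr(X,C)=\dim_K\irr(A,X)$. Since our $B=P_i$ is itself indecomposable of multiplicity one, this gives $\dim\irr(R(\alpha_i),P_i)=\dim\irr(P_i,R(\alpha_{i-1}))=1$ (part (i)) together with vanishing of $\irr$ from or to any other indecomposable projective (part (ii)). For part (iii), the AR sequence starting at any non-projective $R(\alpha)$ has projective middle term, so no non-projective indecomposable can appear as a summand, forcing $\irr_{\Gproj\Lambda}(R(\alpha),R(\alpha'))=0$. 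The most delicate step is the right-almost-split verification above; an alternative, purely module-theoretic route would prove the irreducibility of $a_i$ and $b_i$ directly from the gentle decomposition $\rad P_i=R(\alpha_i)\oplus Q_i$ (with $Q_i$ either zero or the left ideal generated by the other arrow out of $i$) via a short factorization argument, after which (i)--(iii) can be read off from the string-basis description of $\Hom$-spaces over gentle algebras.
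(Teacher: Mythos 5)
Your proposal is correct and takes essentially the same route as the paper: both identify sequence (1) as an almost split sequence in $\Gproj\Lambda$ by factoring every non-retraction through a projective via the vanishing of stable $\Hom$-spaces coming from Kalck's description of $D^b_{sg}(\Lambda)$ (together with $\End_\Lambda(R(\alpha))=K$), and then read off (i)--(iii). The only cosmetic difference is that the paper deduces (i) by a direct lifting argument using $\Ext^1_\Lambda(R(\alpha_{i-1}),P_i)=0$ and (iii) directly from the stable $\Hom$ computation, whereas you invoke the general AR-theoretic multiplicity dictionary; both are valid.
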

\begin{proof}
Note that $R(\alpha_i)$ is indecomposable and sequence (\ref{equation 1}) is not split for any $\alpha_n\dots\alpha_1\in\cc(\Lambda)$ and each $i=1,\dots,n$. We need to check that sequence (\ref{equation 1}) is an almost split sequence in $\Gproj\Lambda$ for each $i=1,\dots,n$.

For any Gorenstein projective module $M$, and a morphism $v:M\rightarrow R(\alpha_{i-1})$ which is not a retraction, since $\underline{\Gproj}(\Lambda)$ is a semisimple category, and $R(\alpha_{i-1})$ is a simple object in $\underline{\Gproj}(\Lambda)$, we get that $v=0$ in $\underline{\Gproj}(\Lambda)$.
So $v$ factors through a projective module
$P$ as $v=v_2v_1$ for some morphisms $v_1:M\rightarrow P$ and $v_2:P\rightarrow R(\alpha_{i-1})$.
It is easy to see that $v_2$ factors through $b_i$ as $v_2=b_iv_3$ for some morphism $v_3:P\rightarrow P_i$, which implies $v=v_2v_1=b_iv_3v_1$, so $b_i$ is right almost split and then sequence (\ref{equation 1})
is almost split.
\[\xymatrix{ R(\alpha_i) \ar[r]^{a_i} & P_i \ar[r]^{b_i} & R(\alpha_{i-1}) \\
& P\ar@{.>}[u]^{v_3} \ar@{.>}[ur]^{v_2} & M.\ar[u]^v\ar@{.>}[l]^{v_1}}\]

(i) For any other irreducible morphism $a_i':R(\alpha_i)\rightarrow P_i$, since $\Ext^1_\Lambda(R(\alpha_{i-1}),P_i)=0$, there exists
a morphism $f:P_i\rightarrow P_i$ such that $a_i'=fa_i$. Note that $a_i$ is not a section, so $f$ is a retraction and then an isomorphism, so $\dim_K\irr_{\Gproj\Lambda}(R(\alpha_i),P_i))=1$.

It is similar to prove that $\dim_K\irr_{\Gproj\Lambda}(P_i,R(\alpha_{i-1}))=1$, we omit the proof here.

(ii) follows from that sequence (\ref{equation 1}) is almost split.

(iii) If $\alpha\neq \alpha'$, then $\underline{\Hom}_\Lambda(R(\alpha),R(\alpha'))=0$, so $\irr_{\Gproj\Lambda}(R(\alpha),R(\alpha'))=0$.
If $\alpha=\alpha'$, then by the proof of Theorem \ref{theorem Kalck} in \cite{Ka}, we get that $\End_\Lambda(R(\alpha))=K$. So $\irr_{\Gproj\Lambda}(R(\alpha),R(\alpha))=0$.
\end{proof}

Since $\proj\Lambda\subset \Gproj\Lambda$, for any indecomposable projective $\Lambda$-modules $P_1,P_2$, we get that $\irr_{\Gproj\Lambda}(P_1,P_2)\subseteq \irr_{\proj\Lambda}(P_1,P_2)$.

\begin{lemma}\label{lemma irreducible morphism of projective modules 2}
Let $\Lambda=KQ/\langle I\rangle$ be a gentle algebra. Let $P_1,P_2$ be two indecomposable projective $\Lambda$-modules with their corresponding vertices $v_1,v_2$ respectively. For any irreducible morphism $f:P_1\rightarrow P_2$ in $\proj\Lambda$ which is induced by an arrow $\alpha:v_2\rightarrow v_1$, then

\emph{(i)} if $\alpha$ lies on a cycle in $\cc(\Lambda)$, then $f$ is not irreducible in $\Gproj\Lambda$, in particular, $f$ factors through $R(\alpha)$ as a composition of two irreducible morphisms in $\Gproj\Lambda$.

\emph{(ii)} if $\alpha$ does not lie on any cycle in $\cc(\Lambda)$, then $f$ is irreducible in $\Gproj\Lambda$.
\end{lemma}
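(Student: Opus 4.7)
For (i), my plan is constructive. Since $\alpha$ lies on a cycle $c\in\cc(\Lambda)$, Theorem~\ref{theorem Kalck} places $R(\alpha)$ in $\ind\Gproj\Lambda$ as a non-projective indecomposable. The map $f$ is right multiplication by $\alpha$ with image exactly $R(\alpha)\subseteq P_2$, so it factors as $P_1\xrightarrow{b}R(\alpha)\xrightarrow{a}P_2$, where $b$ is the canonical surjection $e_1\mapsto\alpha$ and $a$ is the inclusion. Under the identification with the almost split sequence of Sequence~(\ref{equation 1}) associated to $\alpha$, both $a$ and $b$ are irreducible in $\Gproj\Lambda$ by Lemma~\ref{lemma irreducible morphism of Gorenstein projective modules}, and being the extremes of a non-split short exact sequence, neither is a split epi nor a split mono. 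Hence $f=ab$ certifies that $f$ is not irreducible in $\Gproj\Lambda$.

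For (ii), the plan is to show directly that $f\notin\rad^2_{\Gproj\Lambda}(P_1,P_2)$, which yields irreducibility. Because $\langle I\rangle$ is generated in path-length degree $2$, the algebra $\Lambda$ inherits a non-negative grading, and $\rad^k_{\proj\Lambda}(P_1,P_2)$ identifies with the span of paths of length $\geq k$ from $2$ to $1$ in $e_1\Lambda e_2$. Since $\alpha$ has length $1$, $f\notin\rad^2_{\proj\Lambda}$. The extra contributions to $\rad^2_{\Gproj\Lambda}$ arise from compositions through non-projective indecomposables $R(\alpha_k)\in\Gproj\Lambda$; I parameterize such a composition $P_1\xrightarrow{\phi}R(\alpha_k)\xrightarrow{\psi}P_2$ by $\phi=$ right multiplication by $p\alpha_k$ with $p\in e_1\Lambda e_{t(\alpha_k)}$, and $\psi\colon\alpha_k\mapsto m\in e_{t(\alpha_k)}\Lambda e_2$ satisfying $\alpha_k'm=0$, where $\alpha_k'$ is the cyclic successor of $\alpha_k$ (the unique arrow with $s(\alpha_k')=t(\alpha_k)$ and $\alpha_k'\alpha_k\in I$). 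The composition is then right multiplication by $pm$, of path length $|p|+|m|$.

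The main obstacle is to control the length-one compositions. A direct case analysis gives exactly two possibilities: (A) $p=e_1$ with $t(\alpha_k)=1$ and $m$ an arrow from $2$ to $1$; or (B) $p=\alpha$ with $t(\alpha_k)=2$ and $m=e_2$. Case (B) is excluded at once since $s(\alpha_k')=t(\alpha_k)=2$ gives $\alpha_k'e_2=\alpha_k'\neq 0$, violating $\alpha_k'm=0$. In case (A), the well-definedness condition $\alpha_k'm\in I$ combined with the cycle relation $\alpha_k'\alpha_k\in I$ forces $m=\alpha_k$ by the gentle axiom at $\alpha_k'$; hence the only length-one compositions that can appear are the \emph{cyclic} arrows $\alpha_k$ with $s(\alpha_k)=2$ and $t(\alpha_k)=1$. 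Since $\alpha$ is non-cyclic by hypothesis, $\alpha$ is linearly independent (in the path basis of $e_1\Lambda e_2$) of all such cyclic arrows and of $\rad^2_{\proj\Lambda}$, so $\alpha\notin\rad^2_{\Gproj\Lambda}(P_1,P_2)$. This proves $f$ is irreducible in $\Gproj\Lambda$.
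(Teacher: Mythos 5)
Your part (i) is essentially the paper's own argument: both factor $f$ through $R(\alpha)$ via the two short exact sequences of type (\ref{equation 1}) attached to the cycle containing $\alpha$, and both invoke Lemma \ref{lemma irreducible morphism of Gorenstein projective modules} for the irreducibility of the two factors. Your part (ii) is correct but takes a genuinely different route. The paper argues by contradiction from a putative factorization $f=f_2f_1$ through some $M\in\Gproj\Lambda$, extracts a non-projective summand of $M$, builds auxiliary exact sequences, and splits into cases according to whether the vertex $1$ lies on a cycle, eventually forcing $\alpha$ to be cyclic. You instead compute $\rad^2_{\Gproj\Lambda}(P_1,P_2)$ inside $e_1\Lambda e_2$ using the path-length grading (available because $I$ is monomial of degree $2$): compositions through projectives contribute only length $\geq 2$, while a composition through $R(\alpha_k)$ is right multiplication by $pm$ with $\Hom(P_1,R(\alpha_k))=e_1\Lambda\alpha_k$ and $\Hom(R(\alpha_k),P_2)\cong\{m\in e_{t(\alpha_k)}\Lambda e_2:\alpha_k'm=0\}$, and the constraint $\alpha_k'm=0$ pins the degree-one part of $pm$ to a scalar multiple of the cyclic arrow $\alpha_k$; since arrows are linearly independent in $\Lambda$, the non-cyclic arrow $\alpha$ cannot lie in $\rad^2$. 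Both exclusions (case (B) via $\alpha_k'e_{t(\alpha_k)}=\alpha_k'\neq0$, case (A) via the gentle uniqueness of the arrow composing to zero with $\alpha_k'$) are sound. Two points to tighten: $p$ and $m$ are in general linear combinations of paths, so the case analysis should be read on their homogeneous components (the argument goes through verbatim); and you should record the standard fact that for indecomposables in a Krull--Schmidt category irreducibility is equivalent to membership in $\rad\setminus\rad^2$, together with $f\in\rad$ (clear, since $f$ is not an isomorphism). What your approach buys is an explicit description of the full space of degree-one contributions to $\irr_{\Gproj\Lambda}(P_1,P_2)$, not merely the irreducibility of the single arrow $\alpha$, at the cost of leaning on the monomial grading; the paper's approach avoids the grading but requires a more delicate tracking of split monomorphisms and epimorphisms through its auxiliary sequences.
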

\begin{proof}
(i) If $\alpha$ lies on a cycle $c\in\cc(\Lambda)$, we assume that $c$ is of form $\cdots v_3 \xrightarrow{\gamma}v_2\xrightarrow{\alpha} v_1\xrightarrow{\beta}0 \cdots$ (where the vertices can be coincided), then there exist two short exact sequences
$$0\rightarrow R(\alpha)\xrightarrow{a_1} P_2 \xrightarrow{b_1} R(\gamma)\rightarrow0 \mbox{ and } 0\rightarrow R(\beta)\xrightarrow{a_2} P_1 \xrightarrow{b_2}R(\alpha)\rightarrow0,$$
with $a_1b_2=f$. So $f$ is not irreducible in $\Gproj\Lambda$. Lemma \ref{lemma irreducible morphism of Gorenstein projective modules} yields that $a_1,b_2$ are irreducible in $\Gproj\Lambda$, and then (i) follows.

(ii)
Since $f\in \irr_{\proj\Lambda}(P_1,P_2)$, we get that $f$ is neither a section nor a retraction. Suppose for a contradiction that $f$ factors through a module $M\in\Gproj\Lambda$ as $f=f_2f_1$ for some morphisms $f_1:P_1\rightarrow M$ and $f_2:M\rightarrow P_2$, with neither $f_1$ a section nor $f_2$ a retraction. Then $M\notin \proj\Lambda$, so $M=M_1\oplus M_2$ with $M_1$ projective and the indecomposable direct summands of $M_2$ non-projective. Note that $M_2\neq0$.
For any non-projective indecomposable Gorenstein projective module $R_i$, there exist indecomposable projective modules $P_i,P_{i+1}$ and non-projective Gorenstein projective modules $R_{i-1},R_{i+1}$ such that
the following sequences are exact
\begin{equation}\label{equation 2}
0\rightarrow R_i\rightarrow P_i\rightarrow R_{i-1}\rightarrow0, \quad 0\rightarrow R_{i+1}\rightarrow P_{i+1}\rightarrow R_i\rightarrow0.
\end{equation}
So by doing direct sum of the exact sequences as in sequence (\ref{equation 2}) for all indecomposable direct summands of $M_2$, there exist two exact sequences
\begin{equation}\label{equation 3}
0\rightarrow N_1\xrightarrow{a_1} P_{M_2} \xrightarrow{b_1} M_2\rightarrow0,\quad 0\rightarrow M_2\xrightarrow{a_2} Q_{M_2}\xrightarrow{b_2} N_2\rightarrow0,
\end{equation}
where $P_{M_2},Q_{M_2}$ are projective with their indecomposable direct summands corresponding to vertices lying on cycles in $\cc(\Lambda)$, and $N_1,N_2$ are Gorenstein projective modules with their indecomposable direct summands non-projective.
Then for $M$, there exist two exact sequences
\begin{equation}\label{equation 4}
0\rightarrow N_1\xrightarrow{c_1} M_1\oplus P_{M_2} \xrightarrow{d_1} M\rightarrow0,\quad 0\rightarrow M\xrightarrow{c_2} M_1\oplus Q_{M_2}\xrightarrow{d_2} N_2\rightarrow0.
\end{equation}
The proof can be broken into the following two cases.

{\bf Case (1). The vertex $v_1$ does not lie on any cycle in $\cc(\Lambda)$.} Then $f_1$ factors through $d_1$ as the following diagram shows:
\[\xymatrix{ & P_1\ar@{.>}[d]^{f_1'} \ar[rd]^{f_1}\ar[r]^f &P_2\\
N_1\ar[r]^{c_1\quad\quad} &M_1\oplus P_{M_2} \ar[r]^{d_1} &M.\ar[u]_{f_2}}\]
So $f=f_2d_1f_1'$. If $f_1'$ is not a section, then $f_2d_1$ is a retraction since $f$ is irreducible in $\proj \Lambda$ and $M_1\oplus P_{M_2}$ is projective, which yields that $f_2$ is a retraction, giving a contradiction.
So $f_1'$ is a section, which implies that $P_1$ is a direct summand of $M_1$ by the assumption that the vertex $1$ does not lie on any cycle in $\cc(\Lambda)$. Since $M_1$ is a direct summand of $M$, we get that $P_1$ is a direct summand of $M$, i.e. $f_1$ is a section, giving a contradiction.

{\bf Case (2). The vertex $v_1$ lies on some cycle in $\cc(\Lambda)$.}
Then there is a cycle $c\in\cc(\Lambda)$ such that $v_1$ lies on $c$.
So we assume that $c$ locally is $\cdots \xrightarrow{\alpha_1} v_3\xrightarrow{\alpha_2} v_1\xrightarrow{\alpha_3}\cdots$. Let $P_3$ be the indecomposable projective module corresponding to the vertex $v_3$.
Then
there are two exact sequences:
\begin{equation}\label{equation 5}
0\rightarrow R(\alpha_2)\xrightarrow{u_1} P_3 \xrightarrow{v_1} R(\alpha_1)\rightarrow0,\quad 0\rightarrow R(\alpha_3)\xrightarrow{u_2} P_1 \xrightarrow{v_2} R(\alpha_2)\rightarrow0.
\end{equation}
Similar to Case (1), we get that $f_1$ factors through $d_1$ as the following diagram shows:
\[\xymatrix{ & P_1\ar@{.>}[d]^{f_1'} \ar[rd]^{f_1}\ar[r]^f &P_2\\
N_1\ar[r]^{c_1\quad\quad} &M_1\oplus P_{M_2} \ar[r]^{d_1} &M.\ar[u]_{f_2}}\]
Then $f=f_2d_1f_1'$. If $f_1'$ is not a section, then $f_2d_1$ is a retraction since $f$ is irreducible in $\proj \Lambda$ and $M_1\oplus P_{M_2}$ is projective, which yields that $f_2$ is a retraction, giving a contradiction.
So $f_1'$ is a section.

If $f_1'$ induces that $P_1$ is a direct summand of $M_1$, and then it is a direct summand of $M$. By our construction, we get that $f_1:P_1\rightarrow M$ is a section, giving a contradiction. So $f_1'$ induces that $P_1$ is a direct summand of $P_{M_2}$. By our construction, we know that $R(\alpha_2)$ is a direct summand of $M_2$. So $f$ factors through $v_2:P_1\rightarrow R(\alpha_2)$ as $f=g_2v_2$ for some morphism $g_2:R(\alpha_2)\rightarrow P_2$.
From sequence (\ref{equation 5}), we get that $g_2$ factors through $u_1$ as $g_2=g_2'u_1$ for some morphism $f_2':P_3\rightarrow P_2$ since $\Ext^1_\Lambda(R(\alpha_1),P_2)=0$.
Then $f=f_2'u_1v_2$. Since $u_1v_2:P_1\rightarrow P_3$ is the morphism induced by the arrow $\alpha_2$, it is not a section. Therefore, $f_2'$ is a retraction and then an isomorphism. So $f$ is the morphism induced by the arrow $\alpha_2$. However, $f$ is the morphism induced by the arrow $\alpha$, so $\alpha_2=\alpha$. Recall that $\alpha$ does not lie on any cycle in $\cc(\Lambda)$, giving a contradiction.

To sum up, $f$ is an irreducible morphism in $\Gproj(\Lambda)$.
\end{proof}

\begin{theorem}\label{main theorem 1}
Let $\Lambda=KQ/\langle I\rangle$ be a gentle algebra. Then the Cohen-Macaulay Auslander algebra of $\Lambda$ is isomorphic to $KQ^{Aus}/\langle I^{Aus}\rangle$.
\end{theorem}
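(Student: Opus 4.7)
The plan is to construct an explicit isomorphism $\bar\phi\colon KQ^{Aus}/\langle I^{Aus}\rangle \to \Aus(\Gproj\Lambda) = \End_\Lambda(E)^{op}$, where $E=\bigoplus_{M\in\ind\Gproj\Lambda} M$, by computing the Gabriel quiver with relations of the right-hand side. For the vertex set, Theorem \ref{theorem Kalck}(i) gives a natural bijection between $\ind\Gproj\Lambda$ and $Q^{Aus}_0 = Q_0 \sqcup Q_1^{cyc}$, sending $v\mapsto P_v$ and $\alpha\mapsto R(\alpha)$. This identifies the vertex set of the Gabriel quiver of $\End_\Lambda(E)^{op}$ with $Q^{Aus}_0$.

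Next I enumerate the arrows via Lemmas \ref{lemma irreducible morphism of Gorenstein projective modules} and \ref{lemma irreducible morphism of projective modules 2}. Each $\alpha\in Q_1^{ncyc}$ contributes one irreducible morphism $P_{t(\alpha)}\to P_{s(\alpha)}$ (Lemma \ref{lemma irreducible morphism of projective modules 2}(ii)); each $\alpha\in Q_1^{cyc}$ contributes one inclusion $R(\alpha)\hookrightarrow P_{s(\alpha)}$ and one projection $P_{t(\alpha)}\twoheadrightarrow R(\alpha)$ coming from the almost-split sequences in Lemma \ref{lemma irreducible morphism of Gorenstein projective modules}; and there are no further irreducible morphisms involving any $R(-)$. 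These three classes match $Q_1^{ncyc}$, $(Q_1^{cyc})^+$ (with $\alpha^+$ corresponding to the inclusion), and $(Q_1^{cyc})^-$ (with $\alpha^-$ corresponding to the projection), identifying the Gabriel quiver of $\End_\Lambda(E)^{op}$ with $Q^{Aus}$. Let $\phi\colon KQ^{Aus}\to \End_\Lambda(E)^{op}$ be the algebra map sending each arrow to its matching irreducible morphism. I then check $I^{Aus}\subset\ker\phi$ on each generator: for $\beta\alpha\in I$ with $\alpha,\beta\in Q_1^{ncyc}$, vanishing is immediate since $\beta\alpha=0$ in $\Lambda$; for $\beta\alpha\in I$ with $\alpha,\beta\in Q_1^{cyc}$, the uniqueness clauses of the gentle axioms force $\alpha,\beta$ to be consecutive arrows in a common cycle with $t(\alpha)=s(\beta)$, so that the image of $\beta^+\alpha^-$ is the composition $R(\beta)\hookrightarrow P_{s(\beta)}=P_{t(\alpha)}\twoheadrightarrow R(\alpha)$, which vanishes by exactness of the sequence $0\to R(\beta)\to P_{s(\beta)}\to R(\alpha)\to 0$ of Lemma \ref{lemma irreducible morphism of Gorenstein projective modules}. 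Hence $\phi$ descends to $\bar\phi\colon KQ^{Aus}/\langle I^{Aus}\rangle\to \End_\Lambda(E)^{op}$.

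Surjectivity of $\bar\phi$ is immediate since the arrows hit a generating set of $\rad/\rad^2$ of $\End_\Lambda(E)^{op}$. The main obstacle is injectivity, which I plan to establish by a dimension count: for each pair $u,v\in Q^{Aus}_0$ I will exhibit a bijection between a basis of residue classes of paths $u\to v$ in $Q^{Aus}$ modulo $\langle I^{Aus}\rangle$ and a basis of $\Hom_\Lambda(E_u,E_v)$. The four cases (depending on whether each of $u,v$ is in $Q_0$ or in $Q_1^{cyc}$) reduce via the almost-split sequences of Lemma \ref{lemma irreducible morphism of Gorenstein projective modules}: any morphism into $R(\alpha)$ factors uniquely through the projection $P_{t(\alpha)}\twoheadrightarrow R(\alpha)$, and any morphism out of $R(\alpha)$ factors uniquely through the inclusion $R(\alpha)\hookrightarrow P_{s(\alpha)}$, so everything reduces to computing $\Hom_\Lambda(P_u,P_v)$ in terms of paths in $Q$ modulo $I$. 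The delicate point is the combinatorial bookkeeping: verifying that splitting each cycle-arrow $\alpha$ into the pair $\alpha^+,\alpha^-$ through the new vertex $\alpha$ produces exactly the length-two relations in $I^{Aus}$ and no others, and that the identity $\End_\Lambda R(\alpha)=K$ from the proof of Theorem \ref{theorem Kalck} introduces no additional constraint on cyclic paths in $Q^{Aus}$. This bookkeeping is where I expect the main technical effort to lie.
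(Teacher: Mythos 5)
Your identification of the quiver is exactly the paper's: Theorem \ref{theorem Kalck} supplies the vertices, Lemmas \ref{lemma irreducible morphism of Gorenstein projective modules} and \ref{lemma irreducible morphism of projective modules 2} supply the arrows, and your check that $I^{Aus}$ maps to zero (including the observation that the gentle axioms force $\beta$ to be the successor of $\alpha$ on a common cycle, so that $\beta^{+}\alpha^{-}$ becomes the zero composite $R(\beta)\hookrightarrow P_{s(\beta)}\twoheadrightarrow R(\alpha)$ from the short exact sequence) is the paper's easy inclusion $\langle I^{Aus}\rangle\subseteq\langle I^{A}\rangle$. Where you diverge is the converse inclusion, i.e.\ injectivity of $\bar\phi$, and this is where your write-up stops short: you announce a dimension count and state that ``the main technical effort'' lies in the bookkeeping, but that bookkeeping \emph{is} the content of the theorem, so as written the key step is a plan rather than a proof. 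The paper closes it more cheaply than a full dimension count: given a relation $l=\sum_i k_i l_i\in I^{A}$, it first reduces to the case where both endpoints of $l$ lie in $Q_0$ by composing with $\alpha^{+}$ (whose image is injective) or $\alpha^{-}$ (whose image is surjective) --- the same reduction you describe --- and then substitutes $\alpha$ for $\alpha^{-}\alpha^{+}$ to obtain an element $\pi(l)$ of $KQ$ whose image in $\Hom_\Lambda(P_u,P_v)$ vanishes, hence $\pi(l)\in\langle I\rangle$; since $\langle I\rangle$ is a \emph{monomial} ideal (generated by length-two paths), each individual path $\pi(l_i)$ already lies in $\langle I\rangle$, whence each $l_i\in\langle I^{Aus}\rangle$. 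Your dimension count would repackage the same fact (for a monomial algebra the nonzero paths form a basis of each $\Hom(P_u,P_v)$, and $\alpha\leftrightarrow\alpha^{-}\alpha^{+}$ is a bijection on nonzero paths), so the approach is viable, but one caveat: a morphism $M\to R(\alpha)$ does \emph{not} factor uniquely through the projection $P_{t(\alpha)}\twoheadrightarrow R(\alpha)$, since that projection has kernel $R(\beta)$; the factorization exists (because $\underline{\Hom}$ into $R(\alpha)$ vanishes) but the count must be run as $\dim\Hom(M,R(\alpha))=\dim\Hom(M,P_{t(\alpha)})-\dim\Hom(M,R(\beta))$ via the short exact sequence, or replaced outright by the monomial-ideal argument above. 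With that correction and the base case actually carried out, your proof goes through and is essentially the paper's.
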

\begin{proof}
Note that
$$\ind \Gproj(\Lambda)=\ind \proj \Lambda \bigcup \{R(\alpha_1),\dots, R(\alpha_n)|c=\alpha_1\cdots \alpha_n\in\cc(\Lambda)\}.$$
Lemma \ref{lemma irreducible morphism of Gorenstein projective modules} and Lemma \ref{lemma irreducible morphism of projective modules 2} characterize all the irreducible morphisms in $\Gproj\Lambda$, from them, it is easy to see that $Q^{Aus}$ is the quiver of the Cohen-Macaulay Auslander algebra of $\Lambda$.
In fact, the vertex $i\in Q_0\subseteq Q_0^{Aus}$ corresponds to the corresponding indecomposable projective $\Lambda$-module $P_i$; the vertex $\alpha\in Q_1^{cyc}\subseteq Q^{Aus}_0$ corresponding to the $\Lambda$-module $R(\alpha)$; the arrow $\beta\in Q_1^{ncyc}\subseteq Q_1^{Aus}$ corresponds to the irreducible morphism $P_{t(\beta)}\rightarrow P_{s(\beta)}$ induced by $\beta\in Q_1$, see Lemma \ref{lemma irreducible morphism of projective modules 2} (ii). The arrow $\alpha^{-}$ (resp. $\alpha^+$) corresponds to the irreducible morphism $P_{t(\alpha)}\xrightarrow{b} R(\alpha)$ (resp. $R(\alpha)\xrightarrow{a} P_{s(\alpha)}$), see Lemma \ref{lemma irreducible morphism of Gorenstein projective modules} and Lemma \ref{lemma irreducible morphism of projective modules 2} (i). Note that $b$ is surjective and $a$ is injective.

So $\Aus(\Gproj\Lambda)$ is isomorphic to $KQ^{Aus}/\langle I^{A}\rangle$ for some admissible ideal $\langle I^{A}\rangle$. Recall that
$$I^{Aus}= \{\beta^+\alpha^-| \beta\alpha \in I\mbox{ with }\alpha,\beta\in Q_1^{cyc}\}\bigcup\{\beta\alpha|\beta\alpha\in I \mbox{ with }\alpha,\beta\in Q_1^{ncyc}\}.$$
From the above, it is easy to see that $\langle I^{Aus}\rangle\subseteq \langle I^A\rangle$.
Assume that $l=\sum_{i=1}^t k_il_i\in I^A$, where $l_1,\dots,l_t$ are paths in $KQ^{Aus}$ and $k_i\neq0$ for $1\leq i\leq t$. We can also assume that
the starting points and the ending points of all the $l_i,1\leq i\leq t$ are same, which are denoted by $s(l)$, $t(l)$ respectively.
The proof can be broken into the following four cases.

{\bf Case (1).} $s(l),t(l)\in Q_0\subseteq Q_0^{Aus}$. We can view $l$ to be an element in $KQ$ after replacing all the subpaths $\alpha^-\alpha^+$ by $\alpha$, and denote it by $\pi(l)$.  Let us view the arrows as irreducible morphisms. For any arrow $\alpha\in Q_1^{cyc}$, the irreducible morphism from $P_{t(\alpha)}$ to $P_{s(\alpha)}$ in $\proj\Lambda$ induced by $\alpha$ is equal to the combination of the irreducible morphisms in $\Gproj \Lambda$ induced by the arrows $\alpha^-$ and $\alpha^+$. So the morphism from $P_{t(l)}$ to $P_{s(l)}$ induced by $\pi(l)$ in $\proj\Lambda$ is equal to the one induced by $l$ in $\Gproj\Lambda$. Since $l\in I^A$, the morphism from $P_{t(l)}$ to $P_{s(l)}$ induced by $l$ is zero, and then the morphism from $P_{t(l)}$ to $P_{s(l)}$ induced by $\pi(l)$ is also zero. So $\pi(l)\in \langle I\rangle$, and then $\pi(l_i)\in \langle I\rangle$ for any $1\leq i\leq t$, since $\langle I\rangle$ is generated by zero-relations of length two. In other words, for each $1\leq i\leq t$, there exist two arrows $\alpha$, $\beta$ in $Q$ such that $\beta\alpha\in I$ and $\beta\alpha$ is a subpath of $\pi(l_i)$. If $\alpha\in Q_1^{ncyc}$, then $\beta\in Q_1^{ncyc}$, and so $\beta\alpha \in I^{Aus}$, which implies that $l_i\in \langle I^{Aus}\rangle$; if $\alpha\in Q_1^{cyc}$, then $\beta\in Q_1^{cyc}$ and so $\beta^+\alpha^-\in I^{Aus}$. It is easy to see that $\beta^+\alpha^-$ is a subpath of $l_i$, which implies that $l_i\in \langle I^{Aus}\rangle$. Therefore, we have $l_i\in \langle I^{Aus}\rangle$ for each $i$, and then $l\in \langle I^{Aus}\rangle$.

{\bf Case (2).} $s(l)=\alpha \in Q_1^{cyc}\subseteq Q_0^{Aus}, t(l)\in Q_0\subseteq Q_0^{Aus}$. Since there is only one arrow $\alpha^-$ starting from $\alpha$, we can assume $l=l'\alpha^-$ where $l'$ is some element in $KQ^{Aus}$ starting from $t(\alpha)$. Viewing the arrows as irreducible morphisms, since $\alpha^+$ corresponds to an injective morphism, we get that $l=l'\alpha^-\in \langle I^A\rangle$ if and only if $l\alpha^+\in \langle I^A\rangle$. Then $l\alpha^+$ satisfies Case (1), which implies that it is in $\langle I^{Aus}\rangle$. Since $\langle I^{Aus}\rangle$ is generated by zero-relations of length two and $\alpha^-\alpha^+\notin \langle I^{Aus}\rangle$, we get that $l\in \langle I^{Aus}\rangle$.

{\bf Case (3).} $s(l)\in Q_0\subseteq Q_0^{Aus}, t(l)=\alpha \in Q_1^{cyc}\subseteq Q_0^{Aus}$. It is similar to Case (2), only need note that $\alpha^-$ corresponds to a surjective morphism.

{\bf Case (4).} $s(l)=\alpha,t(l)=\beta \in Q_1^{cyc}\subseteq Q_0^{Aus}$. It is also similar to Case (2), only need note that $\alpha^+$ corresponds to an injective morphism and $\beta^-$ corresponds to a surjective morphism.

Therefore, $\langle I^{Aus}\rangle =\langle I^A\rangle$, and so $KQ^{Aus}/\langle I^{Aus}\rangle$ is isomorphic to the Cohen-Macaulay Auslander algebra of $\Lambda$.
\end{proof}

\begin{corollary}
Let $\Lambda=KQ/\langle I\rangle$ be a gentle algebra. Then the Cohen-Macaulay Auslander algebra of $\Lambda$ is also a gentle algebra.
\end{corollary}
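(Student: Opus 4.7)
By Theorem \ref{main theorem 1} the Cohen-Macaulay Auslander algebra is presented as $KQ^{Aus}/\langle I^{Aus}\rangle$, so the task is to verify that the bound quiver $(Q^{Aus},I^{Aus})$ satisfies the four axioms of a gentle bound quiver. The plan is to run this verification directly from the explicit combinatorial description, splitting the check by the type of vertex ($Q_0$-vertex versus $Q_1^{cyc}$-vertex) and the type of arrow ($Q_1^{ncyc}$-arrow versus an arrow of the form $\alpha^{\pm}$).

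For the special biserial degree condition, the construction sets up a natural bijection between arrows of $Q$ incident to a given $i\in Q_0\subseteq Q_0^{Aus}$ and arrows of $Q^{Aus}$ incident to $i$: each $a\in Q_1^{ncyc}$ is preserved, while each $\alpha\in Q_1^{cyc}$ is replaced by $\alpha^+$ or $\alpha^-$ (still incident to $i$ on the appropriate side). At a new vertex $\alpha\in Q_1^{cyc}$ there is exactly one incoming arrow $\alpha^+$ and one outgoing arrow $\alpha^-$. Combined with the gentleness of $Q$, this gives the in/out degree bound of two at every vertex of $Q^{Aus}$. The fact that $I^{Aus}$ is generated by length-two zero relations is immediate from the definition.

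The content of the proof is the verification of the two relation conditions: for each arrow $\alpha$ of $Q^{Aus}$, at most one $\beta$ with $\alpha\beta\in I^{Aus}$ and at most one with $\alpha\beta\notin I^{Aus}$, and symmetrically for $\gamma\alpha$. At a new vertex $\alpha\in Q_1^{cyc}$, the only composable length-two path is $\alpha^-\alpha^+$, which is not in $I^{Aus}$ by inspection of the generators, so both conditions are trivial there. At an old vertex $i\in Q_0$, the essential observation extracted from the definition of $I^{Aus}$ is: a composition $yx$ through $i$ lies in $I^{Aus}$ exactly when $x$ and $y$ are of the same \emph{pure type} (both inherited from $Q_1^{ncyc}$, or both of the form $\beta^-$ and $\alpha^+$ with $\alpha,\beta\in Q_1^{cyc}$) and the induced length-two relation in $Q$ lies in $I$. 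In particular, every mixed composition (one inherited arrow and one dashed arrow) automatically fails to lie in $I^{Aus}$.

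The crucial structural input used in the bookkeeping is that if $\alpha\in Q_1^{cyc}$ appears as $\alpha_k$ in its cycle $c=\alpha_1\cdots\alpha_n\in\cc(\Lambda)$, then $\alpha_{k-1},\alpha_{k+1}\in Q_1^{cyc}$, and by the gentle condition on $(Q,I)$ they are the \emph{unique} arrows starting at $t(\alpha)$ and ending at $s(\alpha)$ with $\alpha_{k-1}\alpha\in I$ and $\alpha\alpha_{k+1}\in I$ respectively. Any other arrow of $Q$ incident to $t(\alpha)$ or $s(\alpha)$ must therefore sit in the opposite slot (not in $I$). Translating via the arrow bijection, this shows that the counts of ``in $I^{Aus}$'' and ``not in $I^{Aus}$'' length-two compositions through each old vertex exactly match the corresponding counts in $(Q,I)$, and hence are at most one each. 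The main obstacle is precisely this bookkeeping at vertices where $Q_1^{ncyc}$-arrows and $Q_1^{cyc}$-arrows mix: the potentially bad configuration producing two compositions outside $I^{Aus}$ through the same arrow is ruled out only by invoking the rigidity of cycles in $\cc(\Lambda)$ together with gentleness of $(Q,I)$. Once this is in hand, the four gentle axioms for $(Q^{Aus},I^{Aus})$ follow, and $\Aus(\Gproj\Lambda)$ is gentle.
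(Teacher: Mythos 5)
Your proposal is correct and follows the same route as the paper: the paper's proof of this corollary is a one-line appeal to "the structure of $Q^{Aus}$ and $I^{Aus}$", and your argument simply carries out that verification of the gentle axioms in full, including the one genuinely non-trivial point (that a composition $\beta a$ with $\beta\in Q_1^{cyc}$ lying in $I$ forces $a$ to be the predecessor of $\beta$ in its cycle, hence $a\in Q_1^{cyc}$), which is exactly the rigidity fact needed to rule out two compositions of the same parity through an old vertex. No gap; your write-up is a legitimate expansion of the paper's "easy to see".
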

\begin{proof}
From the structure of $Q^{Aus}$ and $I^{Aus}$, it is easy to see that $KQ^{Aus}/\langle I^{Aus}\rangle$ is a gentle algebra.
\end{proof}

\section{Some representation properties of the Cohen-Macaulay Auslander agelbras for gentle algebras}

Before going on, let us fix some notations.
Let $\Lambda$ be a gentle algebra and $\Gamma$ be its Cohen-Macaulay Auslander algebra.

For any $M=((M_i)_{i\in Q_0},(M_\alpha:M_i\rightarrow M_j)_{(\alpha:i\rightarrow j)\in Q_1})\in\mod \Lambda$,
define a $\Gamma$-module $\widehat{M}=((N_i,N_\alpha)_{i\in Q_0,\alpha\in Q_1^{cyc}},(N_\beta)_{\beta \in Q_1^{Aus}})$ as follows:

$\bullet$ For any $i\in Q_0\subseteq Q_0^{Aus}$, we set $N_i=M_i$; for any $\alpha\in Q_1^{cyc}\subseteq Q_0^{Aus}$, we set $N_\alpha=\Im M_\alpha$.

$\bullet$ For any arrow in $Q_1^{Aus}$, if it is of form $(\beta:i\rightarrow j)\in Q_1^{ncyc}$, then we set $N_\beta=M_\beta$; if it is of form $\beta^+:i\rightarrow \beta$, or of form
$\beta^-:\beta\rightarrow j$ for some $(\beta:i\rightarrow j)\in Q_1^{cyc}$, we set $N_{\beta^+}$ and $N_{\beta^-}$ to be the natural morphisms $(N_i=M_i)\rightarrow (\Im M_\beta=N_{\beta})$ and $(N_\beta=\Im M_\beta)\rightarrow (M_j=N_j)$ respectively, which are induced by $M_\beta:M_i\rightarrow M_j$.

It is easy to see that $\widehat{M}$ is actually a $\Gamma$-module. Since $\Im$ is a functor, we can define a functor $\Phi:\mod \Lambda\rightarrow \mod \Gamma$ such that $\Phi(M):=\widehat{M}$, with the natural definition on morphisms.

\begin{lemma}[\cite{CL}]\label{lemma additive functor of phi}
Keep the notations as above. Then $\Phi$ is a covariant additive functor from $\mod \Lambda$ to $\mod \Gamma$.
\end{lemma}

Since $(Q,I)$ is a subquiver of $(Q^{Aus},I^{Aus})$, i.e. $\Lambda$ is a subalgebra of $\Gamma$, we get a restriction functor $\res:\mod \Gamma\rightarrow \mod\Lambda$.
Explicitly, for any $N=((N_i,N_\alpha)_{i\in Q_0,\alpha\in Q_1^{cyc}},(N_\beta)_{\beta \in Q_1^{Aus}})\in\mod \Gamma$, $\res(N)$ is defined as follows:

$\bullet$ For any $i\in Q_0$, $(\res(N))_i=N_i$;

$\bullet$ For any arrow $(\alpha:i\rightarrow j)\in Q_1$, if $\alpha\in Q_1^{ncyc}$, we set $(\res N)_\alpha=N_\alpha$; if $\alpha\in Q_1^{cyc}$, we set $(\res(N))_\alpha=N_{\alpha^-}N_{\alpha^+}$.

Since $\Lambda$ and $\Gamma$ are gentle algebras, their indecomposable modules are either string modules or band modules. We describe the action of $\Phi$ and $\res$ on string modules as follows.

$\bullet$ For a string $w=\alpha_1^{\epsilon_1}\alpha_2^{\epsilon_2}\dots\alpha_n^{\epsilon_n} \in\cs(\Lambda)$, denote its corresponding string module by $M(w)$. For $i=1,\dots,n$, if $\alpha_i\in Q_1^{cyc}$, we replace $\alpha_i$ by $\alpha_i^-\alpha_i^+$, and get a word in $\Gamma$, which is denoted by $\iota(w)$. Then it is easy to see that $\iota(w)\in \cs(\Gamma)$, we denote its string module by $N(\iota(w))$. Note that
$$\dimv N(\iota(w))=\dimv M(w)+\sum_{\begin{array}{cc}\alpha_i\in Q_1^{cyc}, \\ w=\alpha_1^{\epsilon_1}\alpha_2^{\epsilon_2}\dots\alpha_n^{\epsilon_n} \end{array}} \dimv S_{\alpha_i},$$
where $S_{\alpha_i}$ is the simple module corresponding to $\alpha_i\in Q_1^{cyc}\subseteq Q_0^{Aus}$.
In this way, we get a map $\iota:\cs(\Lambda)\rightarrow \cs(\Gamma)$, which is injective. It is easy to see that $\Phi(M(w))=N(\iota(w))$.

$\bullet$ For a string $v=\beta_1\beta_2\dots\beta_n \in\cs(\Gamma)$, denote its corresponding string module by $N(v)$. Obviously, $\res(N(v))$ is also a string module if $\res(N(v))\neq0$, we denote by $\pi^-(v)$ the string of $\res(N(v))$. Explicitly, we denote by $v'$ the longest substring of $v$ such that $s(v'),t(v')\in Q_0\subseteq Q_0^{Aus}$, then $\pi^-(v)$ is constructed from $v'$ by replacing $\alpha^-\alpha^+$ with $\alpha$ for each $\alpha\in Q_1^{cyc}$. Note that if $\res(N(v))=0$, then $\pi^-(v)$ is not defined. This only happens when $v=1_{(\alpha,i)}$ for some $\alpha\in Q_1^{cyc}\subseteq Q_0^{Aus}$.

Besides, there exists the shortest string $v''$ with $s(v''),t(v'')\in Q_0\subseteq Q_0^{Aus}$, such that $v$ is a substring of $v''$. Then $\pi^+(v)$ is constructed from $v''$ by replacing $\alpha^-\alpha^+$ with $\alpha$ for each $\alpha\in Q_1^{cyc}$. Obviously, $\pi^+(v)\in \cs(\Lambda)$, we denote its string module by $M(\pi^+(v))$.

In this way, we get two surjective maps $\pi^-,\pi^+:\cs(\Gamma)\rightarrow \cs(\Lambda)$, in fact, $\pi^-\iota=\Id=\pi^+\iota$.

\begin{example}
Keep the notations as in Example \ref{example} (a) and Example \ref{example 3} (a). Let $v=\alpha \gamma_2\beta$, which is a string in $\cs(\Lambda)$. Then
$\iota(v)=\alpha^{-1}\alpha^+\gamma_2\beta^{+}\beta^{-1}$, which is a string in $\cs(\Gamma)$.

For $\pi^+$ and $\pi^-$, we have $\pi^+(\alpha^+)=\alpha$, $\pi^+(\alpha^{-})=\alpha$, and $\pi^-(\alpha^+)= 1_{(1,1)}$, $\pi^-(\alpha^-)=1_{(2,1)}$.
Let $w=\alpha^{+}\gamma_2\beta^+$, which is a string in $\cs(\Gamma)$. Then $\pi^+(w)= \alpha \gamma_2\beta$, which is a string in $\cs(\Lambda)$, and $\pi^{-}(w)=\gamma_2$, which is a string in $\cs(\Lambda)$.
\end{example}
Note that $0\leq l(c)-l(\iota \pi^-(c))\leq 2$ for any string $c\in \cs(\Gamma)$ such that $\pi^-(c)$ is defined.

\begin{lemma}\label{lemma existence of band module}
Let $\Lambda=KQ/\langle I\rangle$ be a gentle algebra. Then $\Lambda$ admits band modules if and only if the Cohen-Macaulay Auslander algebra $\Aus(\Gproj\Lambda)$ of $\Lambda$ admits band modules.
\end{lemma}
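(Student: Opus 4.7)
The plan is to construct, up to cyclic equivalence, a bijection between the bands of $\Lambda$ and those of $\Gamma:=\Aus(\Gproj\Lambda)$ using the natural extension of the maps $\iota$ and $\pi^-$ from strings to closed walks. The key structural observation is that in $Q^{Aus}$, each vertex $\alpha\in Q_1^{cyc}\subseteq Q_0^{Aus}$ has exactly one entering arrow $\alpha^+$ and exactly one exiting arrow $\alpha^-$; consequently, any walk passing through such a vertex is forced to traverse the pair $\alpha^+\alpha^-$ or its formal inverse.

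For the forward direction, let $b=\alpha_1^{\epsilon_1}\cdots\alpha_n^{\epsilon_n}$ be a band in $\Lambda$ and consider $\iota(b)$, obtained by replacing each cyclic arrow $\alpha_i\in Q_1^{cyc}$ by $\alpha_i^-\alpha_i^+$ (and dually for inverses). It is evidently a closed walk of positive length. That $\iota(b)$ is a string in $\Gamma$ follows from a case analysis of its length-$2$ subwords: the new subwords $\alpha^-\alpha^+$ never lie in $I^{Aus}$, and any other length-$2$ subword sits inside some $\iota(\alpha_i\alpha_{i+1})$ and, by the explicit description of $I^{Aus}$ in Theorem \ref{main theorem 1}, lies in $I^{Aus}$ if and only if $\alpha_i\alpha_{i+1}$ lies in $I$. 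To see that $\iota(b)$ is not a proper power, suppose $\iota(b)=u^m$ with $m\geq 2$ and cyclically rotate so that $\iota(b)$ starts at a $Q_0$-vertex; the structural observation forces $u$ also to start at such a vertex and to decompose as a concatenation of $Q_1^{ncyc}$-arrows and pairs $\alpha^\pm$, giving $u=\iota(u')$ for some closed walk $u'$ in $\Lambda$. The injectivity of $\iota$ and its compatibility with concatenation of closed walks based at $Q_0$-vertices then yield $b=(u')^m$, contradicting that $b$ is a band.

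For the reverse direction, let $b'$ be a band in $\Gamma$. Every arrow of $Q^{Aus}$ has at least one endpoint in $Q_0$, so $b'$ passes through some $v_0\in Q_0$; cyclically rotate so that $b'$ starts at $v_0$. The structural observation now forces $b'$ to decompose as a concatenation of $Q_1^{ncyc}$-arrows (and inverses) and pairs $\alpha^\pm$, which means precisely that $b'=\iota(b)$ for a closed walk $b$ in $\Lambda$. A symmetric case analysis of length-$2$ subwords, again using the description of $I^{Aus}$, shows that $b$ is a string in $\Lambda$; and the non-power argument runs as before: any decomposition $b=v^m$ with $m\geq 2$ would give $b'=\iota(v)^m$, contradicting that $b'$ is a band. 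Hence $b$ is a band in $\Lambda$.

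The main obstacle is verifying the "not a proper power" clause in both directions: it requires exploiting the rigid local structure of $Q^{Aus}$ at the $Q_1^{cyc}$-vertices to promote an arbitrary power decomposition of $\iota(b)$ or $b'$ to one compatible with $\iota$, hence descending from a decomposition in $\Lambda$. All other verifications reduce to a direct inspection of the explicit presentation $(Q^{Aus},I^{Aus})$ of $\Gamma$.
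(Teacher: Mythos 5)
Your proposal is correct and takes essentially the same route as the paper: apply $\iota$ for the forward direction and, after cyclically rotating a band of $\Gamma=\Aus(\Gproj\Lambda)$ so that it is based at a vertex of $Q_0\subseteq Q_0^{Aus}$ (using the forced local structure $\alpha^+,\alpha^-$ at each vertex $\alpha\in Q_1^{cyc}$), descend via $\pi^-$ for the converse. The only difference is that you spell out the length-$2$ subword analysis and the primitivity argument where the paper writes ``it is easy to see,'' which is a welcome but not essentially different elaboration.
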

\begin{proof}
Let $b=\alpha_1\alpha_2\cdots \alpha_{n-1}\alpha_n$ be a band in $\Lambda$. Then it is easy to see that $\iota(b)$ is also a band in $\Aus(\Gproj\Lambda)$.

Conversely, for any band $c$ in $\Aus(\Gproj\Lambda)$, if $s(c)=t(c)\in Q_0\subseteq Q_0^{Aus}$, it is easy to see that $\pi^-(c)$ is a band in $Q$. Otherwise, if $s(c)=t(c)\in Q_1^{cyc}$, then there exists $\alpha_1\in Q_1^{cyc}$ such that $s(c)=\alpha_1=t(c)$, which implies that $c$ is of form $\alpha_1^+ c_1 \alpha_1^-$ or $(\alpha_1^-)^{-1} c_1 (\alpha_1^+)^{-1}$, since there is only one arrow $\alpha_1^-$ starting from $\alpha_1$ and one arrow $\alpha_1^+$ ending to $\alpha_1$. We only check it for the first form since the second is similar. Then $d=c_1\alpha_1^{-}\alpha_1^+$ is also a band in $\Aus(\Gproj\Lambda)$. Since $s(d)=t(d)=s(\alpha_1)\in Q_0$, from the definition of $\pi^-$, we get that $s(\pi^-(d))=s(d)=t(d)=t(\pi^-(d))$. Together with
$\pi^-(d^m)=(\pi^-(d))^m$ for any $m>0$, it is easy to see that $\pi^-(d)$ is a band in $\Lambda$.
\end{proof}

\begin{theorem}\label{main theorem 2}
Let $\Lambda=KQ/\langle I\rangle$ be a gentle algebra. Then $\Lambda$ is representation-finite if and only if the Cohen-Macaulay Auslander algebra $\Gamma=\Aus(\Gproj\Lambda)$ of $\Lambda$ is representation-finite.
\end{theorem}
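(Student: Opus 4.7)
The plan is to reduce everything to the existence of band modules and then invoke Lemma~\ref{lemma existence of band module}. The previous corollary guarantees that $\Gamma=\Aus(\Gproj\Lambda)$ is itself a gentle algebra, so both $\Lambda$ and $\Gamma$ are covered by the Butler--Ringel classification: their indecomposables are precisely the string modules and the band modules. Consequently, the representation type of either algebra is controlled by the combinatorics of $\cs(-)$ and $\cb(-)$.

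The key observation I would isolate first is that, for a gentle algebra $A$ with a finite quiver, $A$ is representation finite if and only if $\cb(A)=\emptyset$. The forward implication is immediate, since a single band $b$ produces the infinite family $\{M(b,m,\phi)\mid m\geq 1,\phi\in\underline{\Aut}(K^m)\}$ of pairwise non-isomorphic indecomposables. For the converse, if there were infinitely many strings up to equivalence, then, because the quiver is finite, a pigeonhole argument on consecutive pairs \emph{(vertex, direction of the next letter)} produces a string long enough to repeat such a configuration; a standard cut-and-paste then extracts a band, contradicting $\cb(A)=\emptyset$. Hence $\cs(A)/\rho$ is finite and $A$ is representation finite.

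With this criterion in hand, the theorem follows at once:
\begin{align*}
\Lambda\text{ is representation finite}
&\iff \cb(\Lambda)=\emptyset \\
&\iff \cb(\Gamma)=\emptyset \\
&\iff \Gamma\text{ is representation finite},
\end{align*}
where the first and third equivalences are the criterion just discussed (applied to the gentle algebras $\Lambda$ and $\Gamma$ respectively), and the middle equivalence is precisely Lemma~\ref{lemma existence of band module}.

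The only genuinely non-trivial point is the auxiliary equivalence ``gentle $+$ no bands $\Rightarrow$ rep.~finite'', and this is essentially combinatorial rather than homological. If a cleaner route is preferred, one can instead argue directly on the level of strings using the maps $\iota$ and $\pi^{-}$ introduced before the lemma: $\iota\colon\cs(\Lambda)\to\cs(\Gamma)$ is injective, and $\pi^{-}\colon\cs(\Gamma)\to\cs(\Lambda)$ is surjective with fibres of uniformly bounded size (each string $v\in(\pi^-)^{-1}(w)$ is obtained from $\iota(w)$ by appending at each end at most one letter of the form $\alpha^{\pm}$, because the vertices of $Q_1^{cyc}\subseteq Q_0^{Aus}$ have valency one on each side). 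Hence $\cs(\Lambda)/\rho$ is finite if and only if $\cs(\Gamma)/\rho$ is finite, and combined with Lemma~\ref{lemma existence of band module} for the band components this again yields the theorem.
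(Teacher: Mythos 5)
Your argument is correct, but its primary route is genuinely different from the paper's. The paper likewise starts from Lemma \ref{lemma existence of band module} to dispose of band modules, but it then proves finiteness of the string sets directly: injectivity of $\iota$ transports finiteness of $\cs(\Gamma)/\rho$ down to $\cs(\Lambda)/\rho$, and the finite fibres of $\pi^-$ (each $v$ with $\pi^-(v)=c$ is one of $\iota(c)$, $\alpha\iota(c)$, $\iota(c)\beta$, $\alpha\iota(c)\beta$) transport it back up --- this is exactly the ``cleaner route'' you sketch in your last paragraph. Your main argument instead rests on the stronger auxiliary fact that a finite-dimensional gentle algebra is representation finite as soon as it has no bands, which collapses the whole theorem to the single equivalence $\cb(\Lambda)=\emptyset\iff\cb(\Gamma)=\emptyset$ supplied by Lemma \ref{lemma existence of band module}. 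That auxiliary fact is true and your sketch is essentially right: unboundedly many strings over a finite quiver force a string with a repeated signed letter $c_i=c_j$, and since the relations of a gentle algebra have length two, the cyclic word $c_i\cdots c_{j-1}$ (or its primitive root) is a band. The one imprecision is that the pigeonhole must be taken on the signed letters themselves (there are $2|Q_1|$ of them) rather than on pairs (vertex, direction), since a vertex of a gentle quiver may carry two outgoing or two incoming arrows; with that adjustment the cut-and-paste at the junction $c_{j-1}c_i$ is legitimate because $c_{j-1}c_j$ already occurs in the string. What your route buys is brevity and independence from the explicit description of the fibres of $\pi^-$; what the paper's route buys is that it uses only facts established earlier in the section and avoids importing the band criterion for representation-finiteness of string algebras.
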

\begin{proof}
Theorem \ref{main theorem 1} shows that the Cohen-Macaulay Auslander algebra of $\Lambda$ is $KQ^{Aus}/\langle I^{Aus}\rangle$.

If $\Gamma=\Aus(\Gproj\Lambda)$ is representation-finite, then there is no band in $\Gamma$. Lemma \ref{lemma existence of band module} yields that there is no band in $\Lambda$. For each string $w=\alpha_1\alpha_2\cdots \alpha_n$ in $\cs(\Lambda)$, we have $\iota(w)\in\cs(\Gamma)$. Note that $\iota$ is injective. Since $\Gamma$ is representation-finite and every string defines a unique string module, there are only finitely many strings in $\Gamma$, which implies that there are only finitely many strings in $\Lambda$. Since $\Lambda$ admits no band module, we get that $\Lambda$ is representation-finite.

Conversely, if $\Lambda$ is representation-finite, then there is no band in $\Lambda$. Lemma \ref{lemma existence of band module} yields that there is no band in $\Gamma$. Let $c$ be a string in $\cs(\Lambda)$. For any string $v\in \cs(\Gamma)$ such that $\pi^-(v)=c$, it is easy to see that $\iota(c)$ is a substring of $v$ and $v$ is of form $\iota(c)$, $\alpha \iota(c)$, $\iota(c)\beta$ or $\alpha\iota(c)\beta$ for some $\alpha,\beta$ or their inverses in $(Q_1^{cyc})^\pm$.
Since $(Q_1^{cyc})^\pm$ is a finite set, there are only finitely many strings $v$ in $\cs(\Gamma)$ such that $\pi^-(v)=c$. Additionally, there are only finitely many strings in $\Lambda$, so there are only finitely many strings in $\Gamma$, and then $\Gamma=\Aus(\Gproj\Lambda)$ is representation-finite
since $\Gamma$ admits no band module.
\end{proof}

For a gentle algebra $\Lambda=KQ/\langle I\rangle$, if any indecomposable $\Lambda$-module $M$ is uniquely determined by its dimension vector, then there is no band module in $\Lambda$, since each band yields infinitely many indecomposable modules with the same dimension vector.

\begin{lemma}\label{lemma 2-cycles}
Let $\Lambda=KQ/\langle I\rangle$ be a gentle algebra such that there is no loop in $Q$. If any indecomposable $\Lambda$-module $M$ is uniquely determined by its dimension vector, then for any arrow $\alpha\in Q_1$, there is no arrow from $t(\alpha)$ to $s(\alpha)$, i.e., there is no oriented $2$-cycle in $Q$.
\end{lemma}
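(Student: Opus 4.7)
The plan is to argue by contradiction. Suppose that $\alpha\colon u\to v$ and $\beta\colon v\to u$ are two arrows of $Q$ forming an oriented $2$-cycle; the no-loop hypothesis ensures $u\neq v$. I will exhibit two non-isomorphic indecomposable $\Lambda$-modules with the same dimension vector, which contradicts the hypothesis of the lemma.

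The candidates I have in mind are the two length-one string modules $M(\alpha)$ and $M(\beta)$. Since $I$ is generated by zero-relations of length two, any single arrow trivially satisfies the subword/inverse-subword condition defining a string, so both $\alpha$ and $\beta$ are valid elements of $\cs(\Lambda)$ and the Butler-Ringel construction produces genuine indecomposable $\Lambda$-modules $M(\alpha)$ and $M(\beta)$. By inspecting the walk associated to either string (two vertices, one edge), each module has a one-dimensional space at $u$ and at $v$ and zero elsewhere; hence $\dim M(\alpha) = e_u + e_v = \dim M(\beta)$.

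The only step requiring a moment's thought is the non-isomorphism $M(\alpha)\not\cong M(\beta)$. Under the equivalence relation $\rho$ on $\cs(\Lambda)$ that identifies a string with its formal inverse, the class of $\alpha$ is $\{\alpha,\alpha^{-1}\}$. Since $\beta$ is an arrow of $Q$ and not a formal inverse, and since $\alpha\neq\beta$ as arrows, the strings $\alpha$ and $\beta$ lie in different $\rho$-classes; the Butler-Ringel classification then gives $M(\alpha)\not\cong M(\beta)$. Equivalently, and more concretely, on $M(\alpha)$ the arrow $\alpha$ acts as an isomorphism $K\to K$ while $\beta$ acts as zero, whereas on $M(\beta)$ the roles are reversed, so no pair of vertex-wise isomorphisms can intertwine the two representations. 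There is no substantial obstacle here; the hypothesis forbids the coexistence of these two modules, forcing $Q$ to contain no oriented $2$-cycle.
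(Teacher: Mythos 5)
Your proposal is correct and follows essentially the same route as the paper: the paper's proof likewise produces the two length-one strings $s(\alpha)\xrightarrow{\alpha}t(\alpha)$ and $t(\alpha)\xrightarrow{\beta}s(\alpha)$ and observes that the corresponding string modules share a dimension vector, contradicting the hypothesis. Your additional checks (that single arrows are valid strings and that the two strings lie in distinct $\rho$-classes) are correct elaborations of what the paper leaves implicit.
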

\begin{proof}
If there is an arrow $\beta:t(\alpha)\rightarrow s(\alpha)$, then there are two strings $s(\alpha)\xrightarrow{\alpha} t(\alpha)$, $ t(\alpha)\xrightarrow{\beta}s(\alpha)$. So there are two string modules with the same dimension vector, giving a contradiction.
\end{proof}

\begin{theorem}\label{main theorem 3}
Let $\Lambda=KQ/\langle I\rangle$ be a gentle algebra such that there is no loop in $Q$. If any indecomposable $\Lambda$-module $M$ is uniquely determined by its dimension vector, then any indecomposable $\Aus(\Gproj\Lambda)$-module $N$ is uniquely determined by its dimension vector.
\end{theorem}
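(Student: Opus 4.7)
The plan is to derive structural constraints on $\Lambda$ from the hypothesis and then perform a dimension-vector-tracking analysis on string modules of $\Gamma := \Aus(\Gproj\Lambda)$. Because dimension vectors distinguish indecomposables over $\Lambda$, there are no band modules (a band yields a one-parameter family of non-isomorphic modules with constant dimension vector), no parallel arrows (two parallel arrows would give two length-one string modules with the same dimension vector), and by Lemma \ref{lemma 2-cycles} no oriented $2$-cycles in $Q$; hence between any two vertices of $Q$ there is at most one arrow. Lemma \ref{lemma existence of band module} and Theorem \ref{main theorem 2} then give that $\Gamma$ is representation finite and every indecomposable $\Gamma$-module is a string module.

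Let $N(v_1),N(v_2)$ be indecomposable $\Gamma$-modules with equal dimension vectors. The exact restriction functor $\pi^-$ of Remark \ref{remark functors} satisfies $\pi^-(N(v_i))=M(\pi^-(v_i))$, and the $Q_0$-part of $\dim N(v_i)$ equals $\dim M(\pi^-(v_i))$, so the hypothesis on $\Lambda$ yields $\pi^-(v_1)\sim \pi^-(v_2)$ in $\cs(\Lambda)$. After possibly reversing $v_2$, assume $\pi^-(v_1)=\pi^-(v_2)=c$ and set $u=t(\iota(c))$, $u'=s(\iota(c))$. Then each $v_i$ is obtained from $\iota(c)$ by appending at most one letter at each of the endpoints $u,u'$, and each such letter lands at some vertex of $Q_1^{cyc}$. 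Comparing the $Q_1^{cyc}$-coordinates shows that the multisets of landing vertices for $v_1$ and $v_2$ coincide. Because there is no loop in $Q$, at most one edge of $Q^{Aus}$ joins a given $Q_0$-vertex to a given $Q_1^{cyc}$-vertex, so the extension at a chosen endpoint landing at a chosen vertex is uniquely determined when it exists; an elementary case analysis then reduces the problem to the \emph{cross} case in which $v_1$ is obtained from $\iota(c)$ by extension at one of $u,u'$ and $v_2$ by extension at the other, with both extensions landing at the same vertex $\alpha\in Q_1^{cyc}$. Since $\alpha$ is then adjacent in $Q$ to both $u$ and $u'$, the absence of loops, parallel arrows and $2$-cycles forces $\alpha$ to be the unique arrow of $Q$ between $u$ and $u'$.

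The cross case is the main obstacle. My plan is to exploit the explicit description of $I^{Aus}$ in Theorem \ref{main theorem 1}: for both proposed extensions $v_1,v_2$ to be valid elements of $\cs(\Gamma)$ (i.e.\ without reducible $xx^{-1}$ pairs and without forbidden subwords in $I^{Aus}$), neither the first nor the last letter of $c$ may be the arrow $\alpha$. But then $c$ is a walk in $Q$ from $u'$ to $u$ that avoids the edge $\alpha$, so $c$ together with $\alpha$ forms an undirected cycle in $Q$ of length at least three. I expect that by re-routing a suitable portion of $c$ through $\alpha$ one constructs a second, non-equivalent $\Lambda$-string with the same dimension vector as $c$, contradicting the hypothesis on $\Lambda$; the hard part is verifying that the gentle relations $I$ do not block all such re-routings. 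This contradiction eliminates the cross case and forces $v_1=v_2$, so $N(v_1)\cong N(v_2)$.
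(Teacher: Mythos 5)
Your reductions at the start are sound and match the paper's: no bands over $\Lambda$, hence none over $\Gamma$ by Lemma \ref{lemma existence of band module}; no oriented $2$-cycles by Lemma \ref{lemma 2-cycles}; $\pi^-(v_1)\sim\pi^-(v_2)$ by exactness of $\pi^-$ and the hypothesis on $\Lambda$; and each $v_i$ is $\iota(c)$ extended by at most one letter at each endpoint, with the extension at a given endpoint toward a given vertex of $Q_1^{cyc}$ unique when it exists. But the proof is not complete: the cross case, which you correctly identify as the main obstacle, is left as an expectation ("I expect that by re-routing\dots the hard part is verifying\dots"), and that is precisely where all the real work lies. Moreover, the re-routing strategy you sketch is not the right mechanism and I do not see how to make it produce the required contradiction: replacing a portion of the walk $c$ by the edge $\alpha$ changes the dimension vector, so it does not directly manufacture a second $\Lambda$-string with $\dim$ equal to $\dim c$. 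The paper resolves the cross configurations differently, by passing to the \emph{maximal} completions $\iota\pi^+(v_1)$ and $\iota\pi^+(v_2)$ (which extend through $\alpha$ all the way to $t(\alpha)$ resp.\ $s(\alpha)$): since $\dim\pi^+(v_1)=\dim\pi^+(v_2)$ forces $\iota\pi^+(v_1)\sim\iota\pi^+(v_2)$, one either gets a string equal to its own inverse (impossible, since the endpoints differ by the no-loop hypothesis), or one gets a closed walk all of whose powers are strings, i.e.\ a band in $\Gamma$, contradicting representation finiteness. The key technical point your sketch misses is that the existence of \emph{both} strings $v_1$ and $v_2$ is what certifies that the concatenation of $c$ with $\alpha$ avoids the relations of $I^{\mathrm{Aus}}$ and immediate backtracking on both sides of the junction; without that, no band (and no contradiction) is guaranteed.

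A secondary, smaller issue: your claim that "an elementary case analysis reduces the problem to the cross case" undersells the length-two extension situation (the paper's Case 3), where one must also rule out configurations such as two extensions landing at distinct vertices $\alpha\neq\beta$ attached in swapped fashion at the two endpoints, and configurations eliminated only by the gentleness condition (at most one arrow $\gamma$ with $\gamma\delta\notin I$) rather than by uniqueness of edges. These subcases are each short, but several of them are killed by arguments of the same $\pi^+$/band type as the cross case, so they cannot simply be folded into it. As written, the proposal is a correct reduction plus an unproved core step, and the proposed method for that core step would need to be replaced by the completion-and-band argument to go through.
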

\begin{proof}
If any indecomposable $\Lambda$-module $M$ is determined by its dimension vector, then there is no band in $\Lambda$ and Lemma \ref{lemma existence of band module} yields that $\Gamma=\Aus(\Gproj\Lambda)$ admits no band.
So there are only string modules in $\mod\Gamma$. We also get that any string in $\cs(\Lambda)$ is uniquely determined by its dimension vector up to the equivalence relation $\rho$.

For any vector $v=((v_i)_{i\in Q_0},(v_\alpha)_{\alpha\in Q_1^{cyc}})$ which is a dimension vector of a string $\Gamma$-module, set $v_1$ to be $(v_i)_{i\in Q_0}$ and $v_2$ to be $(v_\alpha)_{\alpha\in Q_1^{cyc}}$. If there are two strings $c,d\in\cs(\Gamma)$, such that $\dimv c=\dimv d=v$, then $l(c)=l(d)$.
If $v_1=0$, then $v$ is the dimension vector of a simple $\Gamma$-module $S_{\alpha}$ for some $\alpha\in Q_1^{cyc}\subseteq Q_0^{Aus}$, the result follows immediately since every simple module is uniquely determined by its dimension vector.

If $v_1\neq0$, then both $\pi^-(c)$ and $\pi^-(d)$ are well-defined, and $v_1$ is the dimension vector of the strings $\pi^-(c)$ and $\pi^-(d)$ in $\Lambda$. It follows that $\pi^-(c)\sim \pi^-(d)$ since $\dimv \pi^-(c)=\dimv \pi^-(d)$ and any string in $\Lambda$ is uniquely determined by its dimension vector up to the equivalence relation $\rho$. After choosing suitable representatives, we can assume that $\pi^-(c)=\pi^-(d)$.
We get that $\iota \pi^-(c)=\iota \pi^-(d)$ appears as substrings of $c$ and $d$. Recall that $0\leq l(c)-l(\iota \pi^-(c))\leq 2$.

{\bf Case (1).} If $l(c)=l(\iota \pi^-(c))$, then $c=\iota\pi^-(c)$, which also implies $d=\iota \pi^-(d)$ by $l(c)=l(d)$. Then $c=d$ since $\pi^-(c)=\pi^-(d)$ and $\iota$ is injective.

{\bf Case (2).} $l(c)-l(\iota \pi^-(c))=1$. We assume that $\iota\pi^-(c)=\iota\pi^-(d)$ is
\[\xymatrix{  b_1 \ar@{-}[r]^{\alpha_1} &b_2 \ar@{-}[r]^{\alpha_2}& \cdots \ar@{-}[r]^{\alpha_{n-1}}& b_n\ar@{-}[r]^{\alpha_n} &b_{n+1}.}\]
Suppose for a contradiction that $c$ is not equivalent to $d$.

Since $\dimv c=\dimv d$, there exists some $\alpha\in Q_1^{cyc}$ such that $c$ and $d$ are of the following forms:
\[\xymatrix{ c_1:\quad \alpha& b_1 \ar[l]_{\quad\quad\alpha^+} \ar@{-}[r]^{\alpha_1} &b_2 \ar@{-}[r]^{\alpha_2}& \cdots \ar@{-}[r]^{\alpha_{n-1}}& b_n\ar@{-}[r]^{\alpha_n} &b_{n+1},}\]
\[\xymatrix{ c_2:\quad \alpha \ar[r]^{\quad\quad\alpha^-}& b_1  \ar@{-}[r]^{\alpha_1} &b_2 \ar@{-}[r]^{\alpha_2}& \cdots \ar@{-}[r]^{\alpha_{n-1}}& b_n\ar@{-}[r]^{\alpha_n} &b_{n+1},}\]
\[\xymatrix{ c_3:\quad  b_1  \ar@{-}[r]^{\quad\quad\alpha_1} &b_2 \ar@{-}[r]^{\alpha_2}& \cdots \ar@{-}[r]^{\alpha_{n-1}}& b_n\ar@{-}[r]^{\alpha_n} &b_{n+1} \ar[r]^{\quad\alpha^+}&\alpha,}\]
\[\xymatrix{ c_4:\quad  b_1  \ar@{-}[r]^{\quad\quad\alpha_1} &b_2 \ar@{-}[r]^{\alpha_2}& \cdots \ar@{-}[r]^{\alpha_{n-1}}& b_n\ar@{-}[r]^{\alpha_n} &b_{n+1} &\alpha \ar[l]_{\quad\alpha^-}.}\]
If $c=c_1$, then $d$ can only be of form $c_3$ or $c_4$ since there is no loop in $Q$.
First, if $d=c_3$, then $\pi^+(d)= \pi^+(\iota \pi^-(d))\alpha^{-1}= \pi^+(\iota \pi^-(c))\alpha^{-1}$, and $\pi^+(c)= \alpha\pi^+(\iota \pi^-(c))$. Then $\dimv \pi^+(d)=\dimv \pi^+(c)$, which means that $\pi^+(d)\sim\pi^+(c)$. If $\pi^+(d)=\pi^+(c)$, then from the definition of $\pi^+$, we get that  $\alpha^-=\alpha_1$, $\alpha^+=\alpha_2$, $\alpha_1=\alpha_3$ and so on. So $\alpha_3=\alpha^-$, which yields that $\alpha^-\alpha^+\alpha^-$ is a string. However, $t(\alpha)=t(\alpha^-)=s(\alpha^+)=s(\alpha)$, which means that $\alpha$ is a loop in $Q$, contradicts to the assumption of $Q$. If $\pi^+(d)=(\pi^+(c))^{-1}$, then $\pi^+(\iota \pi^-(c))\alpha^{-1}=  (\alpha\pi^+(\iota \pi^-(c)))^{-1}= (\pi^+(\iota \pi^-(c)))^{-1}\alpha^{-1}$, which means that $\pi^+(\iota \pi^-(c))=(\pi^+(\iota \pi^-(c)))^{-1}$, giving a contradiction to Remark \ref{lemma string not equal to its inverse}.

Second, if $d=c_4$, then
\[\xymatrix{ \iota\pi^+(c):\quad b_{n+1} & \alpha\ar[l]_{\quad\quad\quad\quad\alpha^-}& b_1 \ar[l]_{\quad\alpha^+} \ar@{-}[r]^{\alpha_1} &b_2 \ar@{-}[r]^{\alpha_2}& \cdots \ar@{-}[r]^{\alpha_{n-1}}& b_n\ar@{-}[r]^{\alpha_n} &b_{n+1}}\]
is a string, and its starting point and ending point coincide. From $\iota\pi^+(d)$, it is easy to see that $(\iota\pi^+(c))^m$ is also a string for any $m>0$, which implies that there is a band in $\Gamma$, giving a contradiction.
In conclusion, $d=c$ if $c$ is of form $c_1$.

For $c$ is one of forms $c_2, c_3$ and $c_4$, the proof is similar to the above, we omit the proof here.

{\bf Case (3).} $l(c)-l(\iota \pi^-(c))=2$. We assume that
\[\xymatrix{ \iota\pi^-(c)=\iota\pi^-(d):&b_1 \ar@{-}[r]^{\alpha_1} &b_2 \ar@{-}[r]^{\alpha_2}& \cdots \ar@{-}[r]^{\alpha_{n-1}}& b_n\ar@{-}[r]^{\alpha_n} &b_{n+1}.}\]
There are four cases for the structure of $c$.

{\bf Case (3a).} $c$ is
\[\xymatrix{ c:\quad \alpha& b_1 \ar[l]_{\quad\quad\alpha^+} \ar@{-}[r]^{\alpha_1} &b_2 \ar@{-}[r]^{\alpha_2}& \cdots \ar@{-}[r]^{\alpha_{n-1}}& b_n\ar@{-}[r]^{\alpha_n} &b_{n+1}\ar[r]^{\quad\beta^+}&\beta}\]
for some $\alpha,\beta\in Q_1^{cyc}$.
If $\alpha=\beta$, then $d=c$ since $\dimv c=\dimv d$ and $Q$ has no loop.

For $\alpha\neq\beta$, suppose for a contradiction that $d$ is not equivalent to $c$.
Then $d$ is one of the following forms:
\[\xymatrix{ d_1:\quad \beta& b_1 \ar[l]_{\quad\quad\beta^+} \ar@{-}[r]^{\alpha_1} &b_2 \ar@{-}[r]^{\alpha_2}& \cdots \ar@{-}[r]^{\alpha_{n-1}}& b_n\ar@{-}[r]^{\alpha_n} &b_{n+1}\ar[r]^{\quad\alpha^+}&\alpha ,}\]
\[\xymatrix{ d_2:\quad \beta& b_1 \ar[l]_{\quad\quad\beta^+} \ar@{-}[r]^{\alpha_1} &b_2 \ar@{-}[r]^{\alpha_2}& \cdots \ar@{-}[r]^{\alpha_{n-1}}& b_n\ar@{-}[r]^{\alpha_n} &b_{n+1}&\alpha \ar[l]_{\quad\alpha^-},}\]
\[\xymatrix{ d_3:\quad \beta \ar[r]^{\quad\quad\beta^-}& b_1  \ar@{-}[r]^{\alpha_1} &b_2 \ar@{-}[r]^{\alpha_2}& \cdots \ar@{-}[r]^{\alpha_{n-1}}& b_n\ar@{-}[r]^{\alpha_n} &b_{n+1}\ar[r]^{\quad\alpha^+}&\alpha ,}\]
\[\xymatrix{ d_4:\quad \beta \ar[r]^{\quad\quad\beta^-}&b_1 \ar@{-}[r]^{\alpha_1} &b_2 \ar@{-}[r]^{\alpha_2}& \cdots \ar@{-}[r]^{\alpha_{n-1}}& b_n\ar@{-}[r]^{\alpha_n} &b_{n+1}&\alpha \ar[l]_{\quad\alpha^-}.}\]

For $d=d_1$, if $n=0$, then $d=c^{-1}$, a contradiction.
If $n>0$, then there are two arrows $\alpha^+,\beta^+$ from $b_1$, and $\alpha_1$ is of form $\alpha_1:b_2\rightarrow b_1$ since $\Gamma$ is gentle. Then $\beta^+\alpha_1,\alpha^+\alpha_1\notin I^{Aus}$, a contradiction.
For $d=d_2$, if $n=0$, then there is an oriented $2$-cycle $b_1\xrightarrow{\alpha^+}\alpha\xrightarrow{\alpha^-}b_1$ in $\Gamma$, a contradiction; if $n>0$, then similar to the above case $d=d_1$, we can get that it is also impossible.
For $d=d_3$, it is easy to see that $b_{n+1}=b_1$, then there is an oriented $2$-cycle $b_1\xrightarrow{\beta^+}\beta\xrightarrow{\beta^-}b_1$, a contradiction.
For $d=d_4$, there is an oriented $2$-cycle $b_{n+1}\xrightarrow{\beta} b_1\xrightarrow{\alpha}b_{n+1}$ in $Q$, a contradiction to Lemma \ref{lemma 2-cycles}.
Therefore, $d$ is equivalent to $c$ in this case.

{\bf Case (3b).}
$c$ is of form
\[\xymatrix{ c:\quad \alpha& b_1 \ar[l]_{\quad\quad\alpha^+} \ar@{-}[r]^{\alpha_1} &b_2 \ar@{-}[r]^{\alpha_2}& \cdots \ar@{-}[r]^{\alpha_{n-1}}& b_n\ar@{-}[r]^{\alpha_n} &b_{n+1}&\beta\ar[l]_{\quad\beta^-}}\]
for some $\alpha,\beta\in Q_1^{cyc}$.
If $\alpha=\beta$, then $d=c$ since $\dimv c=\dimv d$ and $Q$ has no loop.
For $\alpha\neq\beta$, suppose for a contradiction that $d$ is not equivalent to $c$.
Then $d$ is also one of the forms $d_1,d_2,d_3,d_4$ as described in Case (3a).

For $d=d_1$, if $n=0$, then $b_1\xrightarrow{\beta^+} \beta\xrightarrow{\beta^-}b_1$ is an oriented $2$-cycle, a contradiction.
If $n>0$, then we can check that it is impossible similar to Case (3a).
For $d=d_2$, if $n=0$, then $b_1\xrightarrow{\beta^+} \beta\xrightarrow{\beta^-}b_1$ is an oriented $2$-cycle, a contradiction.
If $n>0$, then there are two arrows $\alpha^-,\beta^-$ ending to $b_{n+1}$, and $\alpha_n$ is of form $\alpha_n:b_{n+1}\rightarrow b_{n}$ since $\Gamma$ is gentle. Then $\alpha_n\beta^-,\alpha_n\alpha^-\notin I^{Aus}$, a contradiction.
For $d=d_3$, it is easy to see that $\dimv\pi^+(d)=\dimv\pi^+(c)$, so $\pi^+(d)\sim\pi^+(c)$ and then $\iota\pi^+(d)\sim\iota \pi^+(c)$,
that is 
\[\xymatrix{ \iota\pi^+(c):\quad t(\alpha) & \alpha\ar[l]_{\quad\quad\quad\alpha^-}& b_1 \ar[l]_{\alpha^+} \ar@{-}[r]^{\alpha_1} &b_2 \ar@{-}[r]^{\alpha_2}& \cdots \ar@{-}[r]^{\alpha_{n-1}}& b_n\ar@{-}[r]^{\alpha_n} &b_{n+1}&\beta\ar[l]_{\quad\beta^-}&s(\beta)\ar[l]_{\beta^+}}\]
and
\[\xymatrix{ \iota\pi^+(d):\quad  s(\beta)\ar[r]^{\quad\quad\beta^+}& \beta\ar[r]^{\beta^-} &b_1 \ar@{-}[r]^{\alpha_1} &b_2 \ar@{-}[r]^{\alpha_2}& \cdots \ar@{-}[r]^{\alpha_{n-1}}& b_n\ar@{-}[r]^{\alpha_n} &b_{n+1} \ar[r]^{\quad\alpha^+}&\alpha \ar[r]^{\alpha^-}&t(\alpha)}\]
are equivalent under $\rho$, which implies that $\iota\pi^+(c)=(\iota\pi^+(d))^{-1}$. Then $(\iota\pi^{-}(c))=(\iota\pi^-(c))^{-1}$, which is impossible.
For $d=d_4$, obviously, $b_{n+1}=b_1$ and so $b_1\xrightarrow{\alpha^+}\alpha\xrightarrow{\alpha^-}b_1$ is an oriented $2$-cycle, a contradiction.
Therefore, in this case, $d$ is equivalent to $c$.

{\bf Case (3c).} $c$ is of form
\[\xymatrix{ c:\quad \alpha \ar[r]^{\quad\quad\alpha^-}& b_1  \ar@{-}[r]^{\alpha_1} &b_2 \ar@{-}[r]^{\alpha_2}& \cdots \ar@{-}[r]^{\alpha_{n-1}}& b_n\ar@{-}[r]^{\alpha_n} &b_{n+1}\ar[r]^{\quad\beta^+}&\beta}\]
for some $\alpha,\beta\in Q_1^{cyc}$.
This case is similar to Case (3b), we omit the proof here.

{\bf Case (3d).} $c$ is
\[\xymatrix{ c:\quad \alpha \ar[r]^{\quad\quad\alpha^-}& b_1  \ar@{-}[r]^{\alpha_1} &b_2 \ar@{-}[r]^{\alpha_2}& \cdots \ar@{-}[r]^{\alpha_{n-1}}& b_n\ar@{-}[r]^{\alpha_n} &b_{n+1}&\beta \ar[l]_{\quad\beta^-}}\]
for some $\alpha,\beta\in Q_1^{cyc}$.
This case is similar to Case (3a), we omit the proof here.

To sum up, when $l(c)-l(\iota \pi^-(c))=2$, we get that $c$ is equivalent to $d$.

Therefore, for any strings $c,d$ in $\cs(\Gamma)$, if $\dimv c=\dimv d$, then $c\sim d$. For any indecomposable $\Gamma$-module $N$, we get that $N$ is a string module, which is uniquely determined by its string up to the equivalent relation $\rho$, and so $N$ is uniquely determined by its dimension vector.
\end{proof}

The following example shows that the converse of Theorem \ref{main theorem 3} is not valid.
\begin{example}
Let $\Lambda =KQ/\langle I\rangle$ be a gentle algebra with \[\xymatrix{ Q:& 1\ar@<0.5ex>[r]^{\alpha} & 2\ar@<0.5ex>[l]^{\beta}  &I=\{\alpha\beta,\beta\alpha\}.}\]
Then $Q^{Aus}$ is as following diagram shows and $I^{Aus}=\{\beta^+\alpha^-,\alpha^+\beta^- \}$.
\setlength{\unitlength}{1mm}
\begin{center}
\begin{picture}(40,25)

\put(10,10){\circle{1.3}}
\put(30,10){\circle{1.3}}
\put(20,20){\circle{1.3}}
\put(20,0){\circle{1.3}}
\put(11,11){\vector(1,1){8}}
\put(21,19){\vector(1,-1){8}}
\put(29,9){\vector(-1,-1){8}}
\put(19,1){\vector(-1,1){8}}
\put(9,6){$1$}
\put(19,21){$\alpha$}
\put(29,6){$2$}
\put(19,-4){$\beta$}
\put(13,16){$\alpha^+$}
\put(25,16){$\alpha^-$}
\put(25,2){$\beta^+$}
\put(11,2){$\beta^-$}

\end{picture}
\vspace{0.5cm}
\end{center}

It is easy to see that any indecomposable $KQ^{Aus}/\langle I^{Aus}\rangle$-module is uniquely determined by its dimension vector. However, the indecomposable projective $\Lambda$-modules $P_1,P_2$ corresponding to vertices $1,2$ respectively, have the same dimension vector.
\end{example}

\begin{remark}\label{remark quiver of gentle algebra}
Let $\Lambda=KQ/\langle I\rangle$ be a gentle algebra. If any indecomposable $\Lambda$-module $M$ is uniquely determined by its dimension vector, then for any loop $\alpha:i\rightarrow i$ with $i$ a vertex, there is no arrow $\beta\neq \alpha$ starting from $i$ or ending to $i$.
\end{remark}
\begin{proof}
Since $\Lambda$ is a gentle algebra, for any loop $\alpha:i\rightarrow i$, we have $\alpha^2\in I$.
First, note that there is not another loop $\beta$ with the same starting point $i$. Otherwise, we also have $\beta^2\in I$. Then $\beta\alpha,\alpha\beta\notin I$ since $\Lambda$ is gentle, contradicts to the fact $\Lambda$ is finite-dimensional.

If there is another arrow $\beta:i\rightarrow j$, then $j\neq i$. Obviously, $\beta\alpha\notin I$. So there are two nonequivalent strings
$i\xrightarrow{\alpha} i\xrightarrow{\beta}j$ and $i\xleftarrow{\alpha}i\xrightarrow{\beta}j$, which have the same dimension vector, a contradiction.

If there is another arrow $\beta:j\rightarrow i$, it is similar to the above case, we omit the proof here.
\end{proof}

\begin{example}
Let $\Lambda=KQ/\langle I\rangle$ be a gentle algebra with $Q_0=\{1\}$, $Q_1=\{\alpha:1\rightarrow 1\}$. Then $I=\{\alpha^2\}$.
Let $KQ^{Aus}/\langle I^{Aus}\rangle$ be the Cohen-Macaulay Auslander algebra of $\Lambda$. Then $Q^{Aus}$ is as the following diagram shows and $I^{Aus}=\{\alpha^+\alpha^-\}$.
\[\xymatrix{ Q^{Aus}:&1\ar@<0.5ex>[r]^{\alpha^+} & 2\ar@<0.5ex>[l]^{\alpha^-}  }\]
It is easy to that $KQ^{Aus}/\langle I^{Aus}\rangle$ does not satisfy that any indecomposable module is uniquely determined by its dimension vector.
\end{example}

\begin{corollary}
Let $\Lambda=KQ/\langle I\rangle$ be a gentle algebra with $Q$ connected. Assume that $\Lambda$ satisfies that any indecomposable $\Lambda$-module $M$ is uniquely determined by its dimension vector. If there are two indecomposable $\Aus(\Gproj(\Lambda))$-modules with the same dimension vector, then $\Lambda$ is isomorphic to the local ring $K[X]/\langle X^2\rangle$.
\end{corollary}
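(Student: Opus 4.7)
The plan is to deduce this corollary directly from Theorem \ref{main theorem 3} together with Remark \ref{remark quiver of gentle algebra}, using the connectedness of $Q$ at the end; it is essentially a contrapositive repackaging of those two results.

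First I would proceed by contraposition on the ``$Q$ has no loop'' hypothesis of Theorem \ref{main theorem 3}. Suppose $Q$ contained no loop. Then the hypotheses of Theorem \ref{main theorem 3} are satisfied, so every indecomposable $\Aus(\Gproj\Lambda)$-module is uniquely determined by its dimension vector. This contradicts the assumption that two non-isomorphic indecomposable $\Aus(\Gproj\Lambda)$-modules share the same dimension vector. Hence $Q$ must contain at least one loop $\alpha:i\to i$.

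Next I would invoke Remark \ref{remark quiver of gentle algebra}: under the hypothesis that every indecomposable $\Lambda$-module is determined by its dimension vector, for any loop $\alpha:i\to i$ there is no arrow $\beta\neq\alpha$ starting at $i$ or ending at $i$; the argument of that remark also excludes the existence of a second loop at $i$. Consequently the vertex $i$ is disjoint from every other vertex in the underlying graph of $Q$, and the only arrow incident to $i$ is the single loop $\alpha$. Since $Q$ is connected, this forces $Q_0=\{i\}$ and $Q_1=\{\alpha\}$, which after relabeling $i$ as $1$ gives the claimed conclusion.

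There is no real obstacle to this argument beyond correctly applying the two earlier results and using connectedness to upgrade a local statement about the loop vertex into a global statement about all of $Q$; the only subtlety is remembering that Remark \ref{remark quiver of gentle algebra} rules out not only non-loop arrows incident to $i$ but also a second loop at $i$, so that the isolated vertex really comes equipped with exactly one arrow.
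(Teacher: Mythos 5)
Your argument is correct and is essentially the same as the paper's: contraposition via Theorem \ref{main theorem 3} forces the existence of a loop, and Remark \ref{remark quiver of gentle algebra} together with connectedness of $Q$ collapses the quiver to a single vertex with a single loop. The paper's proof is just a condensed version of exactly this reasoning.
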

\begin{proof}
Since any indecomposable $\Lambda$-module $M$ is uniquely determined by its dimension vector, if there is no loop in $Q$, Theorem \ref{main theorem 3} yields that any indecomposable $\Aus(\Gproj\Lambda)$-module $N$ is determined by its dimension vector, a contradiction. So there is at least one loop in $Q$. Furthermore, Remark \ref{remark quiver of gentle algebra} implies that $Q_0=\{v\}$, $Q_1=\{\alpha:v\rightarrow v\}$ since $Q$ is connected, and so $\Lambda\cong K[X]/\langle X^2\rangle$.
\end{proof}

At the end of this section, we give the following proposition for schurian gentle algebras.
Recall that an algebra $A=KQ/I$ is \emph{schurian} if $\dim_k\Hom_A(P_i,P_j)\leq1$ for any two vertices $i,j$ of $Q$, or in other words, the entries of its Cartan matrix
are only $0$ or $1$.

\begin{proposition}\label{proposition auslander algebras are schurian}
Let $\Lambda=KQ/\langle I\rangle$ be a schurian gentle algebra. Then its Cohen-Macaulay Auslander algebra $\Gamma=\Aus(\Gproj\Lambda)$ is also a schurian gentle algebra.
\end{proposition}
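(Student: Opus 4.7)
The plan is to translate the schurian condition on $\Gamma = \End_\Lambda(E)^{op}$ (where $E = \bigoplus_i E_i$ runs over $\ind \Gproj \Lambda$) into a bound on $\Hom$-spaces on the $\Lambda$ side. Concretely, for vertices $x, y \in Q_0^{Aus}$, the space $e_y \Gamma e_x \cong \Hom_\Lambda(E_x, E_y)$, so $\Gamma$ is schurian iff $\dim_K \Hom_\Lambda(E_x, E_y) \leq 1$ for all indecomposable Gorenstein projective $\Lambda$-modules $E_x, E_y$. By Theorem \ref{theorem Kalck} these are of two kinds, the indecomposable projectives $P_i$ and the modules $R(\alpha)$ with $\alpha$ in some cycle $c \in \cc(\Lambda)$, so there are four cases to treat.

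The projective-projective case is immediate from the hypothesis that $\Lambda$ is schurian. For the remaining three, the main tool is the short exact sequence
\begin{equation*}
0 \longrightarrow R(\alpha) \longrightarrow P_{s(\alpha)} \longrightarrow R(\alpha') \longrightarrow 0
\end{equation*}
from (\ref{equation 1}), where $\alpha'$ denotes the cyclic predecessor of $\alpha$. For $\Hom_\Lambda(P_j, R(\alpha))$, applying $\Hom_\Lambda(P_j,-)$ gives an embedding $\Hom_\Lambda(P_j, R(\alpha)) \hookrightarrow \Hom_\Lambda(P_j, P_{s(\alpha)})$, whose target has dimension at most one. For $\Hom_\Lambda(R(\alpha), P_j)$, applying $\Hom_\Lambda(-, P_j)$ produces a four-term sequence; here I would invoke the standard fact that Gorenstein projectives have vanishing $\Ext^{\geq 1}$ into projectives, so $\Ext^1_\Lambda(R(\alpha'), P_j) = 0$ and $\Hom_\Lambda(R(\alpha), P_j)$ becomes a quotient of $\Hom_\Lambda(P_{s(\alpha)}, P_j)$, again at most one-dimensional.

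Finally, for $\Hom_\Lambda(R(\alpha), R(\beta))$ I would apply $\Hom_\Lambda(R(\alpha), -)$ to the analogous exact sequence $0 \to R(\beta) \to P_{s(\beta)} \to R(\beta') \to 0$, obtaining an injection $\Hom_\Lambda(R(\alpha), R(\beta)) \hookrightarrow \Hom_\Lambda(R(\alpha), P_{s(\beta)})$, which reduces to the previous case. This chain of reductions gives $\dim_K \Hom_\Lambda(E_x, E_y) \leq 1$ throughout, hence $\Gamma$ is schurian.

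The main obstacle is essentially bookkeeping: one must verify that the $\Ext^1$-vanishing used in the $\Hom_\Lambda(R(\alpha), P_j)$ case really applies here, but this is standard because Gorenstein projectives are characterized by $\Ext_\Lambda^{\geq 1}(-, \Lambda) = 0$ and projectives are summands of free modules. Beyond that, the argument is a clean functorial reduction to the assumed schurian property of $\Lambda$ using only the exact sequences already recorded in the proof of Theorem \ref{theorem Kalck}.
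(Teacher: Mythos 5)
Your argument is correct, but it takes a genuinely different route from the paper. You work entirely on the $\Lambda$-side: you identify the Cartan entries of $\Gamma$ with $\dim_K\Hom_\Lambda(E_x,E_y)$ for indecomposable Gorenstein projectives $E_x,E_y$, and then bound these four kinds of Hom-spaces using left/right exactness of $\Hom$, the sequences (\ref{equation 1}), and the vanishing $\Ext^{\geq 1}_\Lambda(G,\Lambda)=0$ for Gorenstein projective $G$ (which is indeed standard and correctly invoked). The paper instead uses its Theorem \ref{main theorem 1} to realize $\Gamma$ as the gentle algebra $KQ^{Aus}/\langle I^{Aus}\rangle$, observes that the indecomposable projective $\Gamma$-modules are string modules, and shows via the string maps $\pi^{\pm}$ that their strings cannot pass through a vertex twice, pulling the schurian hypothesis on $\Lambda$ back through the combinatorics. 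Your reduction is shorter and avoids both Theorem \ref{main theorem 1} and the string bookkeeping; it needs from the gentle setting only Kalck's classification of $\ind\Gproj\Lambda$ and the fact, recorded in (\ref{equation 1}), that each non-projective $R(\alpha)$ embeds into the \emph{indecomposable} projective $P_{s(\alpha)}$ with indecomposable Gorenstein projective cokernel -- indecomposability of the middle term is the one gentle-specific ingredient your cases (2) and (3) silently rely on, and it is worth flagging, since for a general CM-finite schurian algebra the corresponding approximation sequences can have decomposable projective middle terms and the argument would break. The paper's combinatorial proof, by contrast, yields explicit information about the strings of the projective $\Gamma$-modules, which matches the style of the surrounding section.
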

\begin{proof}
Let $P$ be an indecomposable projective $\Gamma$-module corresponding to some vertex $b_1\in Q^{Aus}_0$. Since $\Gamma$ is a gentle algebra, $P$ is a string module, see e.g. \cite[Section 4]{Ka}. Denote by $w$ its string. Then from \cite[Section 4]{Ka}, we get that $w$ is of form
\[\xymatrix{ w:\quad  b_{n+m+1}&  \cdots \ar[l]_{\quad\quad\quad\beta_{m}}& b_{n+2} \ar[l]_{\beta_2}& b_1\ar[l]_{\quad\beta_1} \ar[r]^{\alpha_1} &b_2 \ar[r]^{\alpha_2}& \cdots \ar[r]^{\alpha_{n-1}}& b_n\ar[r]^{\alpha_n\quad} &b_{n+1},}\]
or
\[\xymatrix{ w:\quad a_1 \ar[r]^{\quad\gamma_1} &a_2 \ar[r]^{\gamma_2}& \cdots \ar[r]^{\gamma_{l-1}}& a_l\ar[r]^{\gamma_l\quad} &a_{l+1},}\]
where the paths $\beta_m\dots \beta_2\beta_1$, $\alpha_n\dots\alpha_2\alpha_1$ and $\gamma_l\dots\gamma_2\gamma_1$
appearing above are maximal, e.g. there does not exist $\beta\in Q_1^{Aus}$ such that $\beta\beta_m\notin I^{Aus}$, see e.g. \cite{ASS,Ka}.
Therefore, we only need to check that the string $w$ passes through any vertex at most once.

For $w$ is of the first case, we claim that $b_1,b_{n+1},b_{n+m+1}\in Q_0\subseteq Q_0^{Aus}$. In fact, if $b_1\notin Q_0$, then $b_1=\alpha$ for some $\alpha\in Q_1^{cyc}\subseteq Q_0^{Aus}$. Then there are two arrows $\alpha_1,\beta_1$ starting from $\alpha$. Recall that there is only one arrow $\alpha^-$ starting from $\alpha$ in $Q^{Aus}$, a contradiction. If $b_{n+1}\notin Q_0$, then $b_{n+1}=\beta$ for some $\beta\in Q_1^{cyc}\subseteq Q_0^{Aus}$.
Since there is only one arrow $\beta^+$ ending to $\beta$ in $\Q^{Aus}$, $\alpha_n=\beta^+$. However, $\beta^-\beta^+\notin I^{Aus}$, so we get that $\alpha_n\cdots \alpha_2\alpha_1$ is not maximal, a contradiction. For $b_{n+m+1}\in Q_0$, it is similar to the above.

 It is easy to see that $\pi^-(w)\in\cs(\Lambda)$ is the string of the indecomposable projective $\Lambda$-module corresponding to the vertex $b_1\in Q_0$. From $\Lambda$ is schurian, we get that $\pi^-(w)$ does not pass through any vertex more than once.
It follows that $w$ does not pass through any vertex in $Q_0\subseteq Q_0^{Aus}$ more than once. Furthermore, if $w$ passes through a vertex $\alpha\in Q_1^{cyc}\subseteq Q_0^{Aus}$ at least twice, then
$w$ must pass through $s(\alpha)$ or $t(\alpha)$ at least twice, which yields that $\pi^-(w)$ passes through $s(\alpha)$ or $t(\alpha)$ at least twice, a contradiction.

If $w$ is of the second case, similar to the first case, we get that $a_{l+1}\in Q_0\subseteq Q_0^{Aus}$. If $a_1\in Q_0$, then it is similar to the first case. If $a_1=\alpha\in Q_1^{cyc}$, then $\alpha_1=\alpha^-$ since there is only one arrow $\alpha^-$ starting from $\alpha$. It is easy to see that $\pi^+(w)\in\cs(\Lambda)$ is the string of a quotient of the indecomposable projective $\Lambda$-module $_\Lambda P_{s(\alpha)}$ corresponding to the vertex $s(\alpha)$. Let $v$ be the string of $_\Lambda P_{s(\alpha)}$. From the above, we know that $v$ does not pass through any vertex more than once. Note that $w$ is a substring of $v$, so $w$ does not pass through any vertex more that once.

Therefore,  $\Gamma$ is a schurian algebra.
\end{proof}

\end{document}